\pgfplotsset{compat=1.15}
\def\makeCal#1{%
\expandafter\newcommand\csname c#1\endcsname{\mathcal{#1}}}
\def\makeBB#1{%
\expandafter\newcommand\csname b#1\endcsname{\mathbb{#1}}}
\def\makeFrak#1{%
\expandafter\newcommand\csname f#1\endcsname{\mathfrak{#1}}}
\edef\y{\@Alph\count@}%
\theoremstyle{plain}
\newtheorem{thm}{Theorem}[section]
\newtheorem{cor}[thm]{Corollary}
\newtheorem{lem}[thm]{Lemma}
\newtheorem{prop}[thm]{Proposition}
\newtheorem{quest}[thm]{Question}
\theoremstyle{definition}
\newtheorem{rem}[thm]{Remark}
\newtheorem{defn}[thm]{Definition}
\newtheorem{ex}[thm]{Example}
\newtheorem*{thm*}{Theorem}
\newtheorem*{prop*}{Proposition}
\newenvironment{customthm}[1]
  {\innercustomthm}
  {\endinnercustomthm}
\def\rm{\mathrm}
\DeclareMathOperator{\rk}{rank}
\DeclareMathOperator{\DCoh}{D^b_{coh}}
\DeclareMathOperator{\Dqc}{D_{qc}}
\DeclareMathOperator{\Coh}{Coh}
\DeclareMathOperator{\Cone}{Cone}
\DeclareMathOperator{\Ext}{Ext}
\DeclareMathOperator{\Hom}{Hom}
\newcommand{\id}{\mathrm{id}}
\DeclareMathOperator{\rep}{rep}
\DeclareMathOperator{\QCoh}{QCoh}
\DeclareMathOperator{\Spec}{Spec}
\DeclareMathOperator{\Pic}{Pic}
\DeclareMathOperator{\fib}{fib}
\def\db{\mathrm{D}^{\mathrm{b}}}
\def\bf{\mathbf}
\begin{document}

\title{Admissible subcategories of noncommutative curves}

\author{Antonios-Alexandros Robotis}
\address{Cornell University, Department of Mathematics\newline
\indent 212 Garden Avenue, Ithaca, NY, 14853}
\email{ar2377@cornell.edu}

\begin{abstract}
    We study admissible subcategories of the derived categories of smooth noncommutative (nc) curves as classified by Reiten-van den Bergh. We prove that any admissible subcategory of the derived category of a smooth nc curve is again the derived category of a smooth nc curve. We use this result to classify semi\-orthogonal decompositions in derived categories of nc curves. The results obtained imply that phantom categories do not exist in these cases. As a further application, we prove an extension of the Bondal-Orlov reconstruction theorem to the case of orbifold curves. 
\end{abstract}

\maketitle

\tableofcontents

\section{Introduction}

In algebraic geometry and representation theory, triangulated categories are by now widely recognized as fundamental objects. Originally defined as book-keeping devices, it was later realized that they are interesting objects in and of themselves \cites{Beilinson1978,BGG78,MukaiAbelian}. Subsequently, there was an explosion of interest in the 90's catalyzed by Kontsevich's homological mirror symmetry proposal \cite{kontsevich1994homological}, which -- roughly speaking -- formulates mirror symmetry as an equivalence of triangulated categories between certain derived Fukaya categories of symplectic manifolds and derived categories of coherent sheaves of complex projective varieties. 

Semiorthogonal decompositions, as introduced in \cite{B-KSerre}, are a way to understand triangulated categories by breaking them into smaller pieces:
\begin{defn}
\label{D:SOD}
    A \emph{semiorthogonal decomposition} of a triangulated category $\cD$ is a collection $\cA_1,\ldots, \cA_n$ of full additive subcategories that together generate $\cD$ as a triangulated category and such that $i<j$ implies that $\Hom_{\cD}(X_j,X_i) = 0$ for all $X_i\in \cA_i$ and $X_j\in \cA_j$.
\end{defn}

A semiorthogonal decomposition as in \Cref{D:SOD} is written as $\cD = \langle \cA_1,\ldots, \cA_n\rangle$. The constituent pieces $\cA_i$ are called \emph{admissible subcategories} of $\cD$ and can also be characterized as in \Cref{D:admissible}. In the presence of a semiorthogonal decomposition, one obtains direct sum decompositions of additive invariants so that for example 
\[
    \rm{K}_0(\cD) = \bigoplus_{i=1}^n \rm{K}_0(\cA_i).
\]

An important special case is when the factors are as simple as possible, i.e. when $\cA_i \simeq \DCoh(\Spec \bf{C})$ for all $i=1,\ldots, n$. Choosing an indecomposable object $E_i$ from each $\cA_i$ in this case, we obtain a \emph{full exceptional collection} $E_1,\ldots, E_n$ of $\cD$. That is, a collection of objects $E_1,\ldots, E_n$ that are \emph{exceptional} in that 
\[
    \Hom(E_i,E_i[j]) = 
    \begin{cases}
        \bf{C}& \text{ if }j =0\\
        0 &\:\text{else},
    \end{cases}
\]
such that furthermore 
\begin{enumerate}
    \item $E_1,\ldots, E_n$ generate $\cD$ as a triangulated category; and
    \item and $i<j$ $\Rightarrow$ $\Hom_{\cD}(E_j,E_i) = 0$.
\end{enumerate} 

The study of full exceptional collections in bounded derived categories of varieties and finite dimensional representations of finite dimensional algebras is by now classical, initiated by the pioneering works of Beilinson \cite{Beilinson1978}, Kapranov \cites{KapranovtypeA,KapranovGr}, and Rudakov's seminar \cite{Rudakov1990}.

\begin{ex}
    The archetypal example of a semiorthogonal decomposition comes from Beilinson's \cite{Beilinson1978} full exceptional collection of line bundles on $\bf{P}^n$, given by $\cO,\cO(1),\ldots, \cO(n)$. 
\end{ex}

 There is an action of the braid group $\mathfrak{B}_n$ on the set of length $n$ semiorthogonal decompositions of $\cD$ by mutation constructed in \cite{B-KSerre}, which induces a braid group action on the set of full exceptional collections of length $n$. The orbits of this action are commonly referred to as \emph{helices}. 

One of the oldest questions in the study of derived and triangulated categories is the classification of full exceptional collections and semiorthogonal decompositions of a category $\cD$ up to the braid group action. To date, few results are known outside of cases where $\cD$ is indecomposable. The admissible subcategories of $\bf{P}^1$ are classified; indeed, all proper admissible subcategories are generated by one of the exceptional bundles $\cO(n)$ for $n\in \bf{Z}$. More recently, Pirozhkov \cite{pirozhkov2020admissible} showed that all semiorthogonal decompositions of $\DCoh(\bf{P}^2)$ are obtained from the Beilinson collection $\langle \cO,\cO(1),\cO(2)\rangle$ by mutation and coarsening.  

Representation theorists have also studied exceptional representations of quivers, and in some cases have achieved complete classifications of exceptional objects in triangulated categories of algebraic origin. For example, in the case of $\db(\rep\:Q)$ for $Q$ a finite acyclic quiver, Crawley-Boevey \cite{CrawleyBoevey} proves that the extended braid group action of $\bf{Z}^n\rtimes \mathfrak{B}_n$ on full exceptional collections is transitive, where $n$ is the number of vertices of $Q$. Here, the $\bf{Z}^n$ acts by shifting so that $(a_1,\ldots, a_n)\cdot (E_1,\ldots, E_n) = (E_1[a_1],\ldots, E_n[a_n])$. In \cite{Meltzerexceptional}, Meltzer proves the analogous transitivity result for the action of $\bf{Z}^n \rtimes \mathfrak{B}_n$ on exceptional objects in $\DCoh(\cX)$ for $\cX$ a weighted projective line (called a rational orbifold curve here).

However, in both the case of an acyclic quiver $\db(\rep\:Q)$ and of a weighted projective line $\DCoh(\cX)$ the classification of admissible subcategories does not follow from these results since in principle other more complicated admissible categories not admitting full exceptional collections could appear.

In the present work, we perform a thorough analysis of admissible subcategories of derived categories of so-called noncommutative curves. Examples include $\db(\rep\:Q)$ and $\DCoh(\cX)$ for $Q$ a finite acyclic quiver and for $\cX$ a smooth and proper orbifold curve -- e.g. a weighted projective line. For an Abelian category $\cA$, its \emph{homological dimension} is 
\[
    \rm{hd}(\cA) = \sup_{E,F\in \cA}\{n: \Hom_{\rm{D}(\cA)}(E,F[n])\ne 0\}.
\]
This agrees with the usual definition when $\cA$ has enough injectives or projectives. In this paper, a \emph{noncommutative curve} is a $\bf{C}$-linear Abelian category $\cA$ of homological dimension $1$ satisfying some technical conditions -- see \Cref{D:nccurve}. Subject to these conditions, noncommutative curves are classified by Reiten-van den Bergh \cite{ReitenvDB} and arise as $\rep\:Q$ for $Q$ a finite acyclic quiver or $\Coh(\cX)$ for $\cX$ a smooth and projective orbifold curve (see \Cref{T:RvDBCI}). The first theorem we prove is the following:

\begin{customthm}{A}
[=\Cref{T:mainthm}]
If $\cA$ is a noncommutative curve, then any admissible subcategory of $\db(\cA)$ is equivalent to the bounded derived category of a noncommutative curve. This implies that $\db(\cA)$ contains no phantom subcategories.
\label{T:firstthm}
\end{customthm}

We first use this result to give a complete classification of admissible subcategories of derived categories of noncommutative curves:

\begin{customthm}{B}
[=\Cref{C:acyclicquiver} + \Cref{C:rationalorbifoldindecomp} + \Cref{P:SODorbifolds}] \textcolor{white}{.}
\begin{enumerate} 
    \item If $\cA = \rep\:Q$ for $Q$ a finite acyclic quiver or $\Coh(\cX)$ for $\cX$ a smooth and proper rational orbifold curve, then every maximal length semiorthogonal decomposition of $\db(\cA)$ comes from a full exceptional collection.
    \item If $\cA = \Coh(\cX)$ for $\cX$ a smooth and proper orbifold curve with coarse moduli space $X$ of genus at least one,\footnote{Subject to the hypotheses on $\cX$, $X$ is automatically a smooth and proper curve over $\bf{C}$.} then up to mutation and coarsening every semiorthogonal decomposition is of the form
    \[
        \DCoh(\cX) = \langle \pi^*\DCoh(X),\cE\rangle
    \]
    where $\pi:\cX\to X$ is the coarse moduli map. Furthermore, $\cE$ admits a full exceptional collection of sheaves and the braid group action on $\cE$ is transitive.
\end{enumerate}
\label{T:secondthm}
\end{customthm}

Combining \Cref{T:secondthm}(1) with the results of Crawley-Boevey \cite{CrawleyBoevey} and Meltzer \cite{Meltzerexceptional} mentioned above, we obtain a complete classification of admissible subcategories of non-commutative curves. Finally, we apply these results to give the following extension of the classical Bondal-Orlov reconstruction theorem \cite{BondalOrlovrecon} -- see \Cref{S:recon} for the relevant terminology:

\begin{customthm}{C}
[=\Cref{T:BOstyle}]
    Suppose $\cX$ is a smooth and proper orbifold curve with $g(X) \ge 1$. If $\cY$ is a smooth and proper orbifold such that there is an exact equivalence $\DCoh(\cY)\simeq \DCoh(\cX)$, then $\cX \cong \cY$. 
\end{customthm}

We have also included \Cref{A:appendix} which details an elementary proof of the fact that derived categories of noncommutative curves do not admit phantom subcategories. This was the problem that motivated the present study of these examples.

\subsection*{Acknowledgements}
I thank Matthew Ballard for first encouraging me to think about non-existence of phantom categories in derived categories of acyclic quivers. I also thank Severin Barmeier, Pieter Belmans, Yuri Berest, Michel van den Bergh, and Hannah Dell for many helpful conversations. I thank Ritwick Bhargava for point out an error in an earlier version of this paper. I am especially grateful to Andres Fernandez Herrero for his continued interest in earlier versions of this paper and his helpful suggestions. Finally, it is my pleasure to thank my advisor Daniel Halpern-Leistner for his guidance and support. 

During the preparation of the current version of this paper, I became aware of the work \cite{Elaginthick} which obtains versions of \Cref{C:rationalorbifoldindecomp} and \Cref{P:SODorbifolds} among many other laudable results. Nevertheless, it is my feeling that the perspective and techniques of the present work complement the treatment of \cite{Elaginthick}. 

\subsection*{Notation and conventions}
We denote our ground field by $\bf{C}$, which may stand either for the complex numbers or any algebraically closed field of characteristic zero. All triangulated categories are assumed $\bf{C}$-linear unless otherwise stated. What we call homological dimension is also called ``global dimension'' in the theory of modules over a ring. We use this terminology because ``global dimension'' already has a meaning in the dimension theory of triangulated categories \cite{Qiugldim}. Unless otherwise specified, $\cX$ denotes a smooth and proper orbifold curve as defined in \Cref{E:stackycurve} and $\pi:\cX\to X$ is its coarse moduli space map. Such an $\cX$ is called rational if its coarse moduli space is $\bf{P}^1$.

\section{Noncommutative curves}

A $\bf{C}$-linear category $\cD$ is called \emph{finite-type} if $\Hom_{\cD}(X,Y)$ is a finite dimensional $\bf{C}$-vector space for all $X,Y\in \cD$.

\begin{defn}
    A finite-type $\bf{C}$-linear triangulated category $\cD$ is called \emph{saturated} if every (covariant and contravariant) cohomological functor $H:\cD \to \rm{mod}\:\bf{C}$ is representable. 
\end{defn}

Saturatedness implies existence of a Serre functor on $\cD$, which gives rise to a version of Serre duality \cite{B-KSerre}. By the results \textit{ibid.}, saturated triangulated categories are a natural setting in which to study admissible subcategories. 

\begin{defn} 
\label{D:nccurve}
\cite{ReitenvDB} A \emph{(smooth) noncommutative curve} over $\bf{C}$ is a $\bf{C}$-linear Noetherian Abelian category $\cA$ with $\rm{hd}(\cA) = 1$ such that $\db(\cA)$ is saturated.
\end{defn} 

Saturatedness imposes a type of categorical smoothness. In what follows, we assume all noncommutative curves are smooth in this sense.

\begin{ex}
\label{E:hereditaryalgebra}
    Let $A$ denote a finite dimensional $\bf{C}$-algebra and write $\rm{mod}\:A$ for the Abelian category of its finite dimensional modules. If $A$ is \emph{hereditary}, i.e. any submodule of a projective $A$-module is projective, then $\rm{hd}(\rm{mod}\:A) \le 1$ since every module admits a two term projective resolution. 
    
    Furthermore, it is a classical result that given a hereditary finite dimensional algebra $A$ there exists a finite acyclic quiver such that $\rep Q \cong \rm{mod}\:A$ \cite{Assem_Skowronski_Simson_2006}*{Ch. VII, Thm. 1.7}. Then, one can verify that if there are two adjacent vertices in $Q$, the corresponding simple objects admit a nontrivial extension. So, for $Q$ a connected finite acyclic quiver, $\rep Q$ is a noncommutative curve if and only if $Q$ has at least $2$ vertices. Otherwise, $\rep Q =\rm{mod}\: \bf{C}$.
\end{ex}

\begin{ex}
\label{E:stackycurve}
    Let $\cX$ denote an \emph{orbifold curve} over $\bf{C}$, by which we mean a separated irreducible finite type one-dimensional Deligne-Mumford stack with generically trivial stabilizers. A Deligne-Mumford stack $\cX$ is called \emph{projective} if it admits a closed embedding into a smooth and proper Deligne-Mumford stack with projective coarse moduli space \cite{Kresch}*{Thm. 5.3}. 
    
    Consider $\Coh(\cX)$, the category of coherent sheaves on $\cX$ where $\cX$ is a projective orbifold curve in the sense just defined. $\cX$ is Noetherian since it can be written in the form $[Y/G]$ for $Y$ a quasi-projective variety and $G$ a reductive algebraic group acting linearly on it by \cite{Kresch}. By \cite{NironiDuality}*{Thm 2.22}, $\DCoh(\cX)$ has a Serre functor given by $-\otimes_{\cO_{\cX}}\omega_{\cX}[1]$, where $\omega_{\cX}$ is the canonical sheaf of $\cX$. Then for all $F,G\in \Coh(\cX)$,
    \[
        \Ext^i_{\cO_{\cX}}(F,G) \cong \Ext^{1-i}_{\cO_{\cX}}(G,F\otimes \omega_{\cX})^\vee = 0\:\:\text{for all}\:\: i \ge 2.
    \]
In particular, $\rm{hd} \Coh(\cX) \le 1$. On the other hand, $\Ext^1(\cO_{\cX},\omega_{\cX}) = \Hom(\cO_{\cX},\cO_{\cX})^\vee \ne 0$ so that $\rm{hd} \Coh(\cX) = 1$. $\DCoh(\cX)$ is saturated by \cite{BerghLuntsSchnurer} and standard arguments show that $\Coh(\cX)$ is Noetherian. Thus, $\cX$ is a noncommutative curve. We introduce some more facts about orbifold curves in \Cref{S:admorbifolds}.
\end{ex}

In fact, \Cref{E:hereditaryalgebra} and \Cref{E:stackycurve} are \emph{all} of the ``indecomposable'' examples of noncommutative curves. We call an Abelian category $\cA$ \emph{connected} if given a decomposition $\cA = \cA_1\oplus \cA_2$ with $\cA_1$ and $\cA_2$ full Abelian subcategories of $\cA$, one has $\cA_1 = 0$ or $\cA_2 = 0$.

\begin{thm}
\label{T:RvDBCI}
\cites{ReitenvDB,ChanIngallsCoordinateRings} 
Let $\cA$ denote a connected Noetherian Abelian category with $\rm{hd}(\cA) \in \{0,1\}$ such that $\db(\cA)$ is saturated. Then $\cA$ is equivalent to  
\begin{enumerate}
    \item $\rep Q$ for $Q$ a finite acyclic quiver; or 
    \item $\Coh(\cX)$ for $\cX$ a smooth projective orbifold curve.
\end{enumerate}
Moreover, if $\rm{hd}(\cA) = 0$ then $\cA = \rm{mod}\:\bf{C}$.
\end{thm}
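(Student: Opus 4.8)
The plan is to reduce to the classification of Noetherian hereditary Abelian categories satisfying Serre duality due to Reiten--van den Bergh \cite{ReitenvDB}, and then to use saturatedness to discard all but the two geometric families. First, saturatedness of $\db(\cA)$ means in particular that $\cA$ is finite-type over $k$ and that $\db(\cA)$ carries a Serre functor \cite{B-KSerre}; thus $\cA$ is a connected Noetherian $\Ext$-finite Abelian category with $\rm{hd}(\cA)\le 1$ satisfying Serre duality, which is exactly the setting of \cite{ReitenvDB}. The claim for $\rm{hd}(\cA)=0$ is quick: a connected Noetherian semisimple category has a single simple object $S$ and is equivalent to $\rm{mod}\,D$ for $D=\End(S)^{\rm{op}}$ a division algebra, which is finite-dimensional over $k$ since $\cA$ is finite-type, and hence equal to $k$; so $\cA = \rm{mod}\,k$.

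Now assume $\rm{hd}(\cA)=1$, so $\cA$ is hereditary, and split the argument according to whether $\cA$ has a nonzero projective object. If it does, then \cite{ReitenvDB} presents $\cA$ as a category of finitely presented representations of a quiver --- a priori infinite, with a priori infinite-dimensional path algebra. Saturatedness forces finiteness: it implies $\db(\cA)$ has a classical generator and finite-dimensional morphism spaces, so there are only finitely many indecomposable projectives and they have a finite-dimensional total endomorphism algebra $A$; thus $\cA\cong\rm{mod}\,A$ with $A$ a finite-dimensional hereditary algebra. By the classical equivalence recalled in \Cref{E:hereditaryalgebra}, $\cA\cong\rep Q$ for a finite acyclic quiver $Q$, giving case (1).

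If instead $\cA$ has no nonzero projective, then by Serre duality it has no nonzero injective either, and the classification of such categories in \cite{ReitenvDB} consists of $\Coh X$ for $X$ a noncommutative smooth proper curve, together with a list of ``degenerate'' hereditary categories with Serre duality --- those built from tubes, or from hereditary orders over non-proper bases, lacking a distinguished structure object. Each of these degenerate categories fails to be saturated: for example the corresponding $\db$ has no strong generator, or its $K_0$ is not finitely generated, or one can write down an explicit non-representable cohomological functor to $\rm{mod}\,k$. Hence $\cA\cong\Coh X$ for a noncommutative smooth proper curve, and by Chan--Ingalls \cite{ChanIngallsCoordinateRings} the category of coherent sheaves on such a curve is equivalent to $\Coh \cX$ for a smooth projective orbifold curve $\cX$ (a root stack over its coarse moduli curve), giving case (2).

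The substance here resides in the cited structure theorems; within the present argument the main obstacle is the final case analysis, i.e. confirming that \emph{only} the two geometric families survive saturatedness. In the projective case this amounts to upgrading the ``locally finite'' data produced by \cite{ReitenvDB} to genuinely finite data, using that $\db(\cA)$ is proper with a classical generator. In the non-projective case it amounts to going down the list of non-geometric hereditary categories with Serre duality and checking that each one violates properness or smoothness of $\db(\cA)$; this is where geometric input is truly needed, namely that a Noetherian hereditary category with Serre duality, a compact generator, and finite-dimensional morphism spaces is forced to be the category of coherent sheaves on an orbifold curve.
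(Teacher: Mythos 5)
Your overall route (reduce everything to Reiten--van den Bergh plus Chan--Ingalls) is the paper's route, but you pass through the wrong theorem of \cite{ReitenvDB} and thereby take on proof obligations that you never discharge. The paper cites \cite{ReitenvDB}*{Thm. V.1.2}, which is stated precisely for connected Noetherian Ext-finite categories of homological dimension at most one whose bounded derived category is \emph{saturated}, and whose conclusion is already the dichotomy ``$\mathrm{mod}\,\Lambda$ for $\Lambda$ a finite dimensional hereditary algebra, or $\Coh(\cO_X)$ for $\cO_X$ a sheaf of hereditary orders on a smooth connected projective curve''; after that only the quiver presentation of \Cref{E:hereditaryalgebra} and the Chan--Ingalls translation remain. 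You instead invoke the general classification of Noetherian hereditary categories satisfying Serre duality and must then eliminate the non-geometric families (finitely presented representations of the infinite quivers of type $A_\infty$, $A_\infty^\infty$, $D_\infty$, and the tube-type length categories). That elimination is exactly the content of the saturated-case theorem, and your argument for it is only a menu of possible strategies --- ``no strong generator, or $K_0$ not finitely generated, or an explicit non-representable cohomological functor'' --- none of which is actually verified for any family, and some of which are themselves nontrivial assertions about saturated categories (that saturatedness forces a strong generator or finitely generated $K_0$ is nowhere established; the Bondal--Van den Bergh implication you would want runs in the opposite direction). The same problem occurs in your projective case: ``a classical generator and finite-dimensional morphism spaces, so there are only finitely many indecomposable projectives'' is asserted rather than proven, and it is precisely the point at issue when distinguishing $\mathrm{mod}\,\Lambda$ from representations of infinite quivers.

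So either carry out these exclusions in detail (in effect reproving Section V of \cite{ReitenvDB}) or cite Thm. V.1.2 directly, as the paper does; with that substitution your treatment of the $\mathrm{hd}(\cA)=0$ case and the appeal to \cite{ChanIngallsCoordinateRings} are fine. Two smaller points: your list of degenerate categories is not quite the one in \cite{ReitenvDB} (hereditary orders over non-proper bases do not satisfy the relevant Serre duality, so they never enter), and your step $\End(S)=k$ in the semisimple case uses that $k$ is algebraically closed --- an assumption the paper also makes implicitly, since the passage from hereditary algebras to path algebras of quivers requires it.
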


\begin{proof}
    By \cite{ReitenvDB}*{Thm. V.1.2}, $\cA$ is either equivalent to the category of finite dimensional modules over a hereditary algebra or to $\Coh(\cQ_X)$, where $\cQ_X$ is a sheaf of hereditary orders over a nonsingular connected projective curve $X$. In the former case, one applies \cite{Assem_Skowronski_Simson_2006}*{Ch. VII, Thm. 1.7} to obtain (1). In the latter case, one applies the results of \cite{ChanIngallsCoordinateRings} which state that $\Coh(\cQ_X)$ is equivalent to the category of coherent sheaves on an associated projective smooth orbifold curve $\cX$. Finally, suppose $\rm{hd}(\cA) = 0$. In light of the discussions in \Cref{E:hereditaryalgebra} and \Cref{E:stackycurve}, this is only possible when $\cA$ is the category of finite dimensional representations of a quiver with one vertex and no arrows, i.e. if $\cA = \rm{mod}\: \bf{C}$ as claimed.
\end{proof}

We conclude this section by characterizing the categories of type (1) in \Cref{T:RvDBCI}.

\begin{defn} 
    An Abelian category $\cA$ is called \emph{finite length} if each $A\in \cA$ is Noetherian and Artinian. It follows that each $A\in \cA$ has a filtration $0 = A_0 \subset A_1\subset \cdots \subset A_n = A$ such that 
        \begin{enumerate} 
            \item the associated graded objects $S_i = A_i/A_{i-1}$ for $1\le i \le n$ are simple; and 
            \item the set of isomorphism classes of the associated graded objects and their multiplicities are independent of the filtration.
        \end{enumerate}
    For proofs, see \cite{stacks-project}*{\href{https://stacks.math.columbia.edu/tag/0FCJ}{Tag 0FCJ}}.
\end{defn}

\begin{lem}
\label{L:finitelengthchar}
    If $\cA$ is a noncommutative curve of finite length, then $\cA$ is equivalent to the category of finite dimensional modules over a finite dimensional hereditary algebra.
\end{lem}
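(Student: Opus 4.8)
The plan is to deduce the lemma from \Cref{T:RvDBCI}; the only input beyond that classification is the observation that $\Coh(\cX)$ is never a finite length category. To begin, I would reduce to the case that $\cA$ is connected. Since $\cA$ has finite length, every object of $\cA$ decomposes canonically according to the linkage classes of its composition factors --- two simple objects $S,T$ being called linked when $\Ext^1_\cA(S,T)\ne 0$ or $\Ext^1_\cA(T,S)\ne 0$. Indeed, $\rm{hd}(\cA)=1$ implies that any extension of an object whose composition factors lie in one linkage class by one whose composition factors lie in a distinct class splits, and a short induction on length then produces, for each $M\in\cA$, a direct sum decomposition $M=\bigoplus_\lambda M_\lambda$ with the composition factors of $M_\lambda$ in the class $\lambda$; this gives $\cA=\bigoplus_\lambda\cA_\lambda$ with each $\cA_\lambda$ a connected full Abelian subcategory closed under subquotients, hence again a Noetherian finite length category of homological dimension $\le 1$ whose derived category $\db(\cA_\lambda)$, being a direct summand of the saturated category $\db(\cA)$, is itself saturated.

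This decomposition is finite. If $\cA=\bigoplus_{\lambda\in\Lambda}\cA_\lambda$ with every $\cA_\lambda\ne 0$, choose $0\ne X_\lambda\in\cA_\lambda$ and set $H:=\bigoplus_{\lambda}\Hom_{\db(\cA)}(-,X_\lambda)$. Every object of $\db(\cA)$ is a bounded complex of objects of $\cA$ and so involves only finitely many of the $\cA_\lambda$; hence $H$ is a cohomological functor $\db(\cA)\to\rm{mod}\:k$, which by saturatedness is represented by some $R$. As $R$ lies in $\bigoplus_{\lambda\in\Lambda_0}\cA_\lambda$ for a finite $\Lambda_0\subseteq\Lambda$, for any $\lambda\notin\Lambda_0$ we would get $\id_{X_\lambda}\in H(X_\lambda)=\Hom_{\db(\cA)}(X_\lambda,R)=0$, which is absurd; so $\Lambda=\Lambda_0$ is finite, say $\cA=\cA_1\oplus\cdots\oplus\cA_n$.

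Finally, I would apply \Cref{T:RvDBCI} to each connected component $\cA_i$: it is equivalent to $\rep Q_i$ for a finite acyclic quiver $Q_i$, to $\Coh(\cX_i)$ for a smooth projective orbifold curve $\cX_i$, or to $\rm{mod}\:k$. The second case is impossible, because $\Coh(\cX_i)$ is not of finite length: the structure sheaf $\cO_{\cX_i}$ is Noetherian but not Artinian, since the powers $\cI_x\supsetneq\cI_x^2\supsetneq\cdots$ of the ideal sheaf of any closed point $x$ form an infinite strictly descending chain, strictness holding by Nakayama in the one-dimensional local ring at $x$. Hence each $\cA_i$ is equivalent to $\rep Q_i$, which by \Cref{E:hereditaryalgebra} is equivalent to $\rm{mod}\:A_i$ for a finite dimensional hereditary algebra $A_i$ (with $A_i=k$ in the remaining case), and therefore $\cA\cong\rm{mod}(A_1\times\cdots\times A_n)$ with $A_1\times\cdots\times A_n$ finite dimensional and hereditary --- the global dimension of a finite product of algebras being the maximum of those of the factors. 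I do not anticipate a serious obstacle: the content is carried entirely by \Cref{T:RvDBCI} together with the elementary failure of $\Coh(\cX)$ to be Artinian, and the only mildly delicate point is the reduction to connected components, which is routine given the representability afforded by saturatedness.
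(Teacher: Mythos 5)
Your proposal is correct and takes essentially the same route as the paper: invoke \Cref{T:RvDBCI} and eliminate the case $\Coh(\cX)$ by exhibiting a strictly descending, non-stabilizing chain of subsheaves of $\cO_{\cX}$ (the paper uses the twists $\cO_{\cX}(-nx)$ at a point with trivial stabilizer, which is the same Nakayama-type observation as your ideal-power chain). The only difference is that you spell out the reduction to the connected case via the block decomposition and a saturatedness/representability argument for finiteness of blocks, a step the paper leaves implicit when citing \Cref{T:RvDBCI}.
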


\begin{proof}
    Let $A$ denote a finite dimensional $\bf{C}$-algebra. One can check that $\rm{mod}\:A$ is a finite length Abelian category by arguing with dimension of submodules. By \Cref{T:RvDBCI}, it suffices to show that $\Coh(\cX)$ is never finite length for $\cX$ an orbifold curve. For this, consider $x\in \cX$ a closed point with trivial automorphism group for simplicity. There is an infinite descending chain 
    \[
        \cdots\hookrightarrow\cO_{\cX}(-nx)\hookrightarrow \cdots \hookrightarrow \cO_{\cX}(-x)\hookrightarrow \cO_{\cX} 
    \]
    of inclusions of sheaves, where $\cO_{\cX}(-x)\hookrightarrow \cO_{\cX}$ is the map arising from multiplying by a local equation cutting out $x$. $\rm{coker}(\cO_{\cX}(-x)\to \cO_{\cX})$ is a nonzero torsion sheaf and consequently this is an infinite descending chain of subobjects of $\cO_{\cX}$ which does not stabilize.
\end{proof}

\section{Admissible subcategories of noncommutative curves}

\begin{defn}\label{D:admissible}
\cite{B-KSerre} Let $\cD$ be a triangulated category and consider a full triangulated subcategory $\cB$ of $\cD$.
    \begin{itemize}
        \item $\cB^\perp$ denotes the full subcategory of $\cD$ consisting of objects $X$ such that $\Hom_{\cD}(B,X) = 0$ for all $B\in \cB$
        \item ${}^\perp \cB$ denotes the full subcategory of $\cD$ consisting of objects $X$ such that $\Hom_{\cD}(X,B) = 0$ for all $B\in \cB$.
    \end{itemize}
    We say that $\cB$ is
    \begin{enumerate} 
        \item \emph{left admissible} if every $X\in \cD$ fits into a distinguished triangle $A\to X \to B$ with $A\in {}^\perp \cB$ and $B\in \cB$;
        \item \emph{right admissible} if every $X\in \cD$ fits into a distinguished triangle $B\to X \to A$ with $B\in \cB$ and $A\in \cB^{\perp}$; and 
        \item \emph{admissible} if it is both left and right admissible.
    \end{enumerate}
\end{defn}

It is proven in \cite{B-KSerre} that for $\cD$ a $\bf{C}$-linear Hom-finite triangulated category, left and right admissibility are equivalent in the presence of a Serre functor. These hypotheses will hold in all of the examples considered here.

\begin{lem}
\label{L:admissiblethick}
    If $\cB$ is an admissible subcategory of $\cD$ then $X\oplus Y \in \cB$ implies $X\in \cB$.
\end{lem}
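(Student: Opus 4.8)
The plan is to show that an admissible subcategory $\cB\subseteq\cD$ is closed under direct summands, i.e.\ that it is a \emph{thick} subcategory. Since $\cB$ is by hypothesis a full triangulated subcategory, the only thing to verify is that if $X\oplus Y\in\cB$ then $X\in\cB$ (and symmetrically $Y\in\cB$). I would exploit the two mutually orthogonal decompositions that admissibility provides.

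First, apply right admissibility to $X$: there is a distinguished triangle $B\to X\to A\to B[1]$ with $B\in\cB$ and $A\in\cB^\perp$. The strategy is to prove $A=0$, which forces $X\cong B\in\cB$. To do this, I would also decompose $Y$ using right admissibility, $B'\to Y\to A'\to B'[1]$ with $B'\in\cB$, $A'\in\cB^\perp$, and take the direct sum of the two triangles to get
\[
B\oplus B'\longrightarrow X\oplus Y\longrightarrow A\oplus A'\longrightarrow (B\oplus B')[1].
\]
Now $B\oplus B'\in\cB$ and $A\oplus A'\in\cB^\perp$, so this is \emph{the} (functorial, hence essentially unique) decomposition triangle of $X\oplus Y$ with respect to the semiorthogonal pair $(\cB,\cB^\perp)$. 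But $X\oplus Y\in\cB$, so its decomposition triangle is $X\oplus Y\xrightarrow{\ \id\ }X\oplus Y\to 0$; by uniqueness of such triangles (the map $X\oplus Y\to A\oplus A'$ is zero since it factors through an object of $\cB$ mapping to $\cB^\perp$, and then the triangle splits off $A\oplus A'$ as a summand of $(B\oplus B')[1]\in\cB$ which lies in $\cB^\perp$, hence $A\oplus A'=0$). Therefore $A$, being a summand of $A\oplus A'=0$, is itself $0$, and $X\cong B\in\cB$.

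The one point requiring a little care — and the main potential obstacle — is making the uniqueness argument airtight: one must know that for a semiorthogonal pair $(\cB,\cB^\perp)$ the triangle $B\to X\to A$ with $B\in\cB$, $A\in\cB^\perp$ is unique up to unique isomorphism, and that its formation is additive. This is standard (it is the content of Bondal--Kapranov's mutation/projection formalism: the assignment $X\mapsto A$ is the left-adjoint-style projection functor $\cD\to\cB^\perp$, which is automatically additive, so it sends $X\oplus Y$ to $A\oplus A'$), so I would simply cite \cite{B-KSerre}. An even shorter route avoiding any uniqueness discussion: from $X\oplus Y\in\cB$ and $A\in\cB^\perp$ we get $\Hom_\cD(X\oplus Y,A[i])=0$ for all $i$, in particular $\Hom_\cD(X,A)=0$; applying $\Hom_\cD(-,A)$ to the triangle $B\to X\to A$ and using $\Hom_\cD(B,A[i])=0$ gives $\Hom_\cD(A,A)=0$, so $A=0$ and $X\cong B\in\cB$. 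I would present this second argument as the proof, as it is self-contained and uses only the definitions in \Cref{D:admissible}.
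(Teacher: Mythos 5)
Your proof is correct, but it takes a different route from the paper. The paper disposes of the lemma in one line: for an admissible subcategory one has $\cB = {}^\perp(\cB^\perp)$, and any orthogonal complement is automatically closed under direct summands because it is cut out by Hom-vanishing conditions, which pass to summands; the paper simply cites the Stacks project for this. You instead argue directly from the definition of right admissibility: take the decomposition triangle $B\to X\to A$ with $B\in\cB$, $A\in\cB^\perp$, use $X\oplus Y\in\cB$ to get $\Hom_{\cD}(X,A)=0$, and then the long exact sequence for $\Hom_{\cD}(-,A)$ forces $\Hom_{\cD}(A,A)=0$, hence $A=0$ and $X\cong B\in\cB$. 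This second argument of yours is airtight and self-contained, needing only \Cref{D:admissible} and the fact that $\cB^\perp$ is shift-closed, whereas the paper's proof is shorter but leans on the external fact $\cB={}^\perp(\cB^\perp)$. Your first argument (additivity/uniqueness of the projection triangles) also works, and in fact the parenthetical splitting argument you give already makes it rigorous without invoking uniqueness, but it is longer than either your second argument or the paper's citation; presenting the second argument as the proof, as you propose, is the right call.
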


\begin{proof}
    This follows from $\cB = {}^\perp(\cB^\perp)$; see \cite{stacks-project}*{\href{https://stacks.math.columbia.edu/tag/0CQP}{Section 0CQP}}.
\end{proof}

For an Abelian category $\cA$, its bounded derived category $\db(\cA)$ has a standard bounded t-structure with t-cohomology functors denoted $\{H^i\}_{i\in \bf{Z}}$. In particular, for any $X\in \db(\cA)$, $H^i(X) \in \cA$ for all $i$. We always regard $\cA$ as the full subcategory of $\db(\cA)$ whose objects have cohomology only in degree $0$.

\begin{lem}
\label{L:hd1lem}
    If $\cA$ is an Abelian category with $\rm{hd}(\cA) \in \{0,1\}$, then for every object $X\in \db(\cA)$ one has $X \cong \bigoplus_{i\in \bf{Z}}H^i(X)[-i]$, where all but finitely many summands are zero.
\end{lem}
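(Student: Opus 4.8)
The statement is that in $\db(\cA)$ with $\mathrm{hd}(\cA)\le 1$, every object is formal, i.e. splits as the direct sum of its shifted cohomology objects. First I would reduce to a finite induction: since $X\in\db(\cA)$ has bounded cohomology, only finitely many $H^i(X)$ are nonzero, say concentrated in degrees $a\le i\le b$, and I induct on the ``cohomological width'' $b-a$. The base case $b=a$ is trivial since then $X\cong H^a(X)[-a]$. For the inductive step, I would use the truncation triangle associated to the standard t-structure: writing $\tau_{\le b-1}X$ and $\tau_{\ge b}X = H^b(X)[-b]$, there is a distinguished triangle
\[
\tau_{\le b-1}X \longrightarrow X \longrightarrow H^b(X)[-b] \xrightarrow{\ \delta\ } (\tau_{\le b-1}X)[1].
\]
By the inductive hypothesis $\tau_{\le b-1}X\cong \bigoplus_{i=a}^{b-1}H^i(X)[-i]$, so the connecting map $\delta$ lives in
\[
\Hom_{\db(\cA)}\!\bigl(H^b(X)[-b],\, \textstyle\bigoplus_{i=a}^{b-1}H^i(X)[1-i]\bigr)\ \cong\ \bigoplus_{i=a}^{b-1}\Ext^{b+1-i}_{\cA}\!\bigl(H^b(X),H^i(X)\bigr).
\]

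The key point is that each exponent $b+1-i$ with $i\le b-1$ satisfies $b+1-i\ge 2$, so by the hypothesis $\mathrm{hd}(\cA)\le 1$ every such $\Ext$ group vanishes. Hence $\delta = 0$, and a triangle with zero connecting map splits: $X\cong \tau_{\le b-1}X\oplus H^b(X)[-b]$. Combining with the inductive hypothesis gives the claimed decomposition, and the finiteness of the number of nonzero summands is immediate from boundedness. I should also note that this splitting recovers the cohomology correctly, i.e. $H^i$ of the right-hand side is indeed $H^i(X)$, which is clear since the $H^i$ are additive and $H^i(M[-j]) = M$ if $i=j$ and $0$ otherwise.

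The only genuinely delicate point is justifying that a distinguished triangle $U\to V\to W\xrightarrow{0} U[1]$ with vanishing third map forces $V\cong U\oplus W$; this is a standard fact in any triangulated category (the long exact sequence of $\Hom$'s, or explicitly: $\id_W$ lifts along $V\to W$ to a section because the obstruction is the composite with $\delta = 0$). There is no real obstacle here — the argument is entirely formal once one has the homological dimension bound in hand, which is exactly what kills the connecting maps. I would present it as the short induction above.
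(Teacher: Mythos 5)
Your argument is correct and is essentially the standard proof of this fact: the paper itself simply cites \cite{HuybrechtsFM}*{Cor. 3.15}, whose proof is exactly your induction on the cohomological width using the truncation triangle and the vanishing of the connecting map, which lies in $\Ext^{\ge 2}$ groups killed by $\rm{hd}(\cA)\le 1$. The splitting of a triangle with zero connecting morphism is standard, as you note, so there is no gap.
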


\begin{proof}
    This is well known, see \cite{Bruning}*{Lem. 3.3} for instance.
\end{proof}

\begin{lem}
\label{L:intersectheart}
    Suppose $\cA$ is an Abelian category with $\rm{hd}(\cA) \in\{0,1\}$ and consider an admissible subcategory $\cB\subset \db(\cA)$. $\cA\cap \cB$ is the heart of a bounded t-structure on $\cB$. 
\end{lem}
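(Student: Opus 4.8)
The plan is to show that the standard bounded $t$-structure on $\db(\cA)$, with aisles $\db(\cA)^{\le n}$ and $\db(\cA)^{\ge n}$, \emph{restricts} to $\cB$, meaning that the pair $(\cB\cap\db(\cA)^{\le 0},\ \cB\cap\db(\cA)^{\ge 0})$ is a bounded $t$-structure on $\cB$. Its heart is then, by definition, $\cB\cap\db(\cA)^{\le 0}\cap\db(\cA)^{\ge 0}=\cB\cap\cA$, which is exactly the claim. The shift axiom $(\cB\cap\db(\cA)^{\le n})[1]=\cB\cap\db(\cA)^{\le n-1}$ (and its companion for $\ge$) follows from $\cB[1]=\cB$ together with the corresponding identity in $\db(\cA)$, and the orthogonality axiom $\Hom_{\cB}(\cB\cap\db(\cA)^{\le 0},\cB\cap\db(\cA)^{\ge 1})=0$ is inherited from $\db(\cA)$ since $\cB$ is a full subcategory. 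So the only real point is to produce, for each $X\in\cB$, a distinguished triangle $A\to X\to C\to A[1]$ with $A\in\cB\cap\db(\cA)^{\le 0}$ and $C\in\cB\cap\db(\cA)^{\ge 1}$; equivalently, to check that the truncation functors $\tau^{\le n}$ and $\tau^{\ge n}$ of the standard $t$-structure carry $\cB$ into $\cB$.

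This is where $\rm{hd}(\cA)\le 1$ does all the work. Given $X\in\cB$, \Cref{L:hd1lem} provides a (non-canonical) isomorphism $X\cong\bigoplus_{i\in\bZ}H^i(X)[-i]$ with only finitely many nonzero summands. Each $H^i(X)[-i]$ is thus a direct summand of $X\in\cB$, hence lies in $\cB$ by \Cref{L:admissiblethick}; since $\cB$ is a triangulated subcategory it is closed under shifts and finite direct sums, so $H^i(X)\in\cB$ as well, and $H^i(X)\in\cA\cap\cB$. Because $H^i(X)[-i]$ has cohomology concentrated in degree $i$ and $\tau^{\le n}$, $\tau^{\ge n}$ commute with finite direct sums, we get $\tau^{\le n}X\cong\bigoplus_{i\le n}H^i(X)[-i]\in\cB$ and $\tau^{\ge n}X\cong\bigoplus_{i\ge n}H^i(X)[-i]\in\cB$ for every $n$. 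Taking $A=\tau^{\le 0}X$ and $C=\tau^{\ge 1}X$ in the standard truncation triangle gives the triangle required above, now with all vertices in $\cB$.

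Assembling these points, $(\cB\cap\db(\cA)^{\le 0},\cB\cap\db(\cA)^{\ge 0})$ is a $t$-structure on $\cB$ with heart $\cA\cap\cB$; it is bounded because for $X\in\cB\subset\db(\cA)$ one has $H^i(X)=0$ for $|i|\gg 0$, so $X\in\cB\cap\db(\cA)^{\le n}\cap\db(\cA)^{\ge -n}$ for $n\gg 0$. In fact the argument yields a little more — every object of $\cB$ is a finite direct sum of shifts of objects of $\cA\cap\cB$ — which will be convenient later. There is no serious obstacle here: the whole content is the splitting of \Cref{L:hd1lem} combined with the closure of an admissible subcategory under direct summands (\Cref{L:admissiblethick}). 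The hypothesis $\rm{hd}(\cA)\le 1$ is genuinely essential, since for a general abelian category an admissible subcategory of its derived category need not be stable under the ambient truncation functors.
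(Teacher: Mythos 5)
Your proof is correct and follows essentially the same route as the paper: both use the splitting of \Cref{L:hd1lem} together with closure of an admissible subcategory under direct summands (\Cref{L:admissiblethick}) to show that the cohomology objects $H^i(X)$ lie in $\cB$ and that the standard truncations of any $X\in\cB$ remain in $\cB$, so the standard bounded t-structure restricts to $\cB$ with heart $\cA\cap\cB$. Your write-up merely spells out the t-structure axioms (shift, orthogonality, truncation triangles) more explicitly than the paper's filtration phrasing.
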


\begin{proof}
    Consider $X\in \cB$ and write $X = \bigoplus_{k\in \bf{Z}} H^k(X)[-k]$ as in \Cref{L:hd1lem}. $\cB$ is closed under summands and shifts, so $H^k(X)\in \cB$ for all $k$. On the other hand, associated to the standard t-structure is a filtration of $0 = X_{n+1}\to X_n \to \cdots \to X_2\to X_1 = X$ such that $\Cone(X_{i+1}\to X_i)$ is given by one of the nonzero $H^{k_i}(X)[-k_i]$. It follows that each $X_i$ lies in $\cB$ and consequently that $\cA \cap \cB$ is the heart of a bounded t-structure on $\cB$.
\end{proof}

\begin{prop}
\label{P:inducedtstructure}
    Suppose $\cA$ and $\cB$ are as in \Cref{L:intersectheart}. There exists an exact equivalence $r:\db(\cA\cap \cB)\to \cB$ which restricts to the identity on $\cA\cap \cB$.
\end{prop}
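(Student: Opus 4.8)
The plan is to exhibit $\cB$ as the bounded derived category of $\cA\cap\cB$ via the \emph{derived inclusion functor}, after first showing that $\cA\cap\cB$ is itself an abelian category of homological dimension at most $1$. Write $\cH:=\cA\cap\cB$. By \Cref{L:intersectheart}, $\cH$ is the heart of a bounded t-structure on $\cB$, so short exact sequences in $\cH$ are precisely the distinguished triangles $Y\to E\to X\to Y[1]$ of $\cB\subseteq\db(\cA)$ with all three terms in $\cH$. Passing to $\cA$-cohomology, any such triangle is a short exact sequence in $\cA$, so the inclusion $\cH\hookrightarrow\cA$ is exact. Conversely, given a short exact sequence $0\to Y\to E\to X\to 0$ in $\cA$ with $X,Y\in\cH$, the connecting map $X\to Y[1]$ is a morphism between objects of $\cB$, hence lies in $\cB$ since $\cB$ is a full triangulated subcategory of $\db(\cA)$; thus $E$, as its cone, lies in $\cB$, so $E\in\cH$. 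I would record the upshot as: $\cH$ is an extension-closed exact abelian subcategory of $\cA$, and the comparison map $\Ext^1_\cH(X,Y)\to\Ext^1_\cA(X,Y)$ is an isomorphism for all $X,Y\in\cH$ (injectivity because a section in $\cA$ between objects of the full subcategory $\cH$ already lies in $\cH$; surjectivity by the extension argument just given).

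The technical heart of the argument — and the step I expect to be the main obstacle — is to deduce $\rm{hd}(\cH)\le 1$ from $\rm{hd}(\cA)\le 1$. Using the Yoneda description of $\Ext$, a class $\eta\in\Ext^2_\cH(X,Y)$ is represented by an exact sequence $0\to Y\to E_1\to E_0\to X\to 0$ in $\cH$; set $K=\im(E_1\to E_0)\in\cH$, so that $\eta$ is the image of the class of $0\to K\to E_0\to X\to 0$ under the connecting map $\delta\colon \Ext^1_\cH(X,K)\to\Ext^2_\cH(X,Y)$ attached to the short exact sequence $0\to Y\to E_1\to K\to 0$. By naturality of the comparison isomorphisms above in the covariant variable, the map $\Ext^1_\cH(X,E_1)\to\Ext^1_\cH(X,K)$ agrees, through isomorphisms, with $\Ext^1_\cA(X,E_1)\to\Ext^1_\cA(X,K)$, and the latter is surjective because the next term $\Ext^2_\cA(X,Y)$ vanishes. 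Exactness of the long exact sequence for $\Ext_\cH(X,-)$ then forces $\delta=0$, hence $\eta=0$; so $\Ext^2_\cH=0$, and $\Ext^{\ge 2}_\cH=0$ follows by factoring higher Yoneda classes through an $\Ext^{k-1}_\cH$ and inducting. Thus $\rm{hd}(\cH)\le 1$, and by \Cref{L:hd1lem} every object of $\db(\cH)$ splits as a finite direct sum of shifts of objects of $\cH$.

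Finally, since $\cH\hookrightarrow\cA$ is exact it induces an exact functor $r\colon\db(\cH)\to\db(\cA)$ (apply the inclusion termwise and descend to derived categories, using that an exact functor preserves acyclicity), and $r$ restricts to the identity on $\cH$. Its essential image is a full triangulated subcategory of $\db(\cA)$ containing $\cH$; as the bounded t-structure on $\cB$ has heart $\cH$, every object of $\cB$ is a finite iterated extension of shifts of objects of $\cH$, so this subcategory is exactly $\cB$, and $r$ is naturally viewed as a functor $\db(\cH)\to\cB$. It remains to check that $r$ is fully faithful, which by the usual dévissage over the bounded cohomological filtrations reduces to showing that $\Ext^n_\cH(X,Y)\to\Hom_{\db(\cA)}(X,Y[n])$ is an isomorphism for all $X,Y\in\cH$ and $n\in\bZ$: for $n<0$ both sides vanish, for $n=0$ it is the identity on $\Hom_\cH(X,Y)$, for $n=1$ it is the isomorphism from the first paragraph, and for $n\ge 2$ both groups vanish because $\rm{hd}(\cH)\le 1$ and $\rm{hd}(\cA)\le 1$. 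Hence $r\colon\db(\cA\cap\cB)\xrightarrow{\ \sim\ }\cB$ is an exact equivalence restricting to the identity on $\cA\cap\cB$; equivalently, $r$ is the realization functor of the induced t-structure on $\cB$.
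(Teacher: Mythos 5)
Your argument is correct, but it follows a genuinely different route from the paper. The paper gets $r$ essentially for free from the realization-functor machinery: it cites \cite{LPZ}*{Lem. 3.1} for the existence of a t-exact functor $\db(\cA\cap\cB)\to\cB$ restricting to the identity on the heart of \Cref{L:intersectheart}, and then Lemma 3.3 \textit{ibid.}, whose hypothesis is exactly the vanishing $\Hom_{\db(\cA)}(X,Y[n])=0$ for $n\notin\{0,1\}$, to conclude it is an equivalence. You instead build $r$ by hand as the derived functor of the exact inclusion $\cH=\cA\cap\cB\hookrightarrow\cA$, which forces you to establish intermediate facts the paper never needs at this stage: that $\cH$ is an extension-closed abelian subcategory whose kernels, cokernels and images agree with those of $\cA$ (your cone/summand and connecting-map arguments do this correctly), that the Yoneda comparison $\Ext^1_{\cH}(X,Y)\to\Ext^1_{\cA}(X,Y)$ is an isomorphism, and — the real extra work — that $\rm{hd}(\cH)\le 1$ via the splice-and-long-exact-sequence argument. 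Note that the paper deduces $\rm{hd}(\cA\cap\cB)\le 1$ only \emph{afterwards}, in the proof of \Cref{T:mainthm}, as a consequence of the equivalence; your route needs it \emph{before} constructing the equivalence, and your Yoneda argument supplies it, using only that Yoneda $\Ext$ (equivalently $\Hom_{\rm{D}(\cH)}(-,-[n])$, as in the paper's definition) has long exact sequences in any abelian category. What your approach buys is self-containedness: no appeal to filtered enhancements or the existence of realization functors, at the cost of a longer argument. One small wrinkle: the claim that the essential image of $r$ is a full \emph{triangulated} subcategory is not justified before full faithfulness is known (essential images of exact functors need not be closed under cones), and the phrase ``finite iterated extension'' would likewise require closure under extensions; but this is harmless, since you have already shown $\rm{hd}(\cH)\le 1$, so by \Cref{L:hd1lem} every object of $\db(\cH)$ and, by the proof of \Cref{L:intersectheart}, every object of $\cB$ splits as a finite direct sum of shifts of objects of $\cH$, which identifies the essential image with $\cB$ directly and also trivializes the dévissage step in your full-faithfulness check.
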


\begin{proof}
    By \cite{LPZ}*{Lem. 3.1}, there is a t-exact functor $r:\db(\cA\cap \cB)\to \cB$ which restricts to the identity on the heart $\cA\cap \cB\subset \cA$ of \Cref{L:intersectheart}. By \cite{LPZ}*{Lem. 3.3}, this is an exact equivalence, since given a pair of objects $X,Y$ of $\cA \cap \cB$, one has $\Hom_{\db(\cA)}(X,Y[n]) = 0$ for all $n\not\in \{0,1\}$. 
\end{proof}

\begin{thm}
\label{T:mainthm}
    If $\cA$ is a noncommutative curve, then any admissible subcategory of $\db(\cA)$ is equivalent to the bounded derived category of a category of type (1) or (2) in \Cref{T:RvDBCI}. In particular, $\db(\cA)$ contains no phantom subcategories.
\end{thm}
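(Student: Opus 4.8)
The plan is to use \Cref{P:inducedtstructure} to replace $\cB$ by the bounded derived category of the abelian category $\cC := \cA\cap\cB$, and then to recognize $\cC$ via \Cref{T:RvDBCI}. The first thing I would record is that an admissible subcategory of a saturated triangulated category is again saturated: given a cohomological functor $H\colon \cB\to\rm{mod}\:k$, precompose it with the projection $\db(\cA)\to\cB$ supplied by admissibility to get a cohomological functor on $\db(\cA)$; this is represented by some object $Z$, and the $\cB$-component of $Z$ then represents $H$ (dually for contravariant $H$). This is essentially the argument of \cite{B-KSerre}. Combined with \Cref{P:inducedtstructure}, which gives an exact equivalence $r\colon\db(\cC)\xrightarrow{\sim}\cB$ restricting to the identity on $\cC$, we conclude that $\db(\cC)$ is saturated.

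Next I would verify that $\cC$ is a noncommutative curve (possibly $\rm{mod}\:k$). That $\rm{hd}(\cC)\in\{0,1\}$ is immediate: for $X,Y\in\cC$ one has $\Ext^n_{\cC}(X,Y)=\Hom_{\db(\cC)}(X,Y[n])=\Hom_{\cB}(X,Y[n])=\Ext^n_{\cA}(X,Y)$, which vanishes for $n\ge 2$ since $\rm{hd}(\cA)=1$. The Noetherian property is the point where I would be most careful, though it comes out cleanly: by the construction of the t-structure in \Cref{L:intersectheart}, the inclusion $\cB\hookrightarrow\db(\cA)$ is t-exact (its truncations are restrictions of the standard ones), hence restricts on hearts to an \emph{exact, fully faithful} functor $\cC\hookrightarrow\cA$. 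Such a functor reflects monomorphisms and isomorphisms, so any ascending chain of subobjects of an object of $\cC$ maps to an ascending chain of subobjects in the Noetherian category $\cA$, which stabilizes; therefore $\cC$ is Noetherian.

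To finish, I would decompose $\cC$ into its connected components. There are only finitely many: $\db(\cC)\simeq\cB$ is saturated, hence classically generated by a single object, which can only be supported on finitely many components. Writing $\cC=\cC_1\oplus\cdots\oplus\cC_m$, each $\cC_i$ is connected, Noetherian, of homological dimension $\le 1$, and $\db(\cC_i)$ is a direct summand of the saturated category $\db(\cC)$, hence admissible in it and so itself saturated by the argument of the first step. Applying \Cref{T:RvDBCI}, each $\cC_i$ is of type (1) or (2). Thus $\cB\simeq\db(\cC_1)\times\cdots\times\db(\cC_m)$, a product of bounded derived categories of categories of type (1) or (2); in particular $K_0(\cB)=\bigoplus_i K_0(\cC_i)$, which is nonzero because every category of type (1) or (2) has nonzero $K_0$ (finitely many vertices in case (1); a generic-rank surjection onto $\bZ$ in case (2)). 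Hence $\cB$ is not a phantom, and since $\cB$ was arbitrary, $\db(\cA)$ has no phantom subcategories.

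I expect the real content to be carried entirely by the Reiten--van den Bergh and Chan--Ingalls classification once the hypotheses are in place; accordingly, the part demanding the most care is the verification that $\cC$ inherits saturatedness (along admissibility, and down to connected summands) together with the Noetherian property, the latter via the exact fully faithful embedding $\cC\hookrightarrow\cA$. If one insists on phrasing the conclusion with a single category of type (1) or (2) rather than a finite product, one notes that a finite disjoint union of finite acyclic quivers is again one and that a $\bP^1$-orbifold curve has derived category equivalent to that of a finite acyclic quiver; genuinely mixed products --- e.g. of a positive-genus curve with a quiver --- are what force the ``indecomposable'' phrasing used in the introduction.
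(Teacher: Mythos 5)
Your argument is essentially the paper's: intersect $\cB$ with the heart (\Cref{L:intersectheart}), use \Cref{P:inducedtstructure} to identify $\cB\simeq\db(\cA\cap\cB)$, check the hypotheses of \Cref{T:RvDBCI} (homological dimension via the equivalence, Noetherianity, saturatedness of semiorthogonal factors, for which the paper simply cites Bondal--Kapranov), and finish by the nonvanishing of $K_0$ in each case. You are in fact more careful than the paper on two points it passes over silently: the Noetherianity of $\cA\cap\cB$ (your exact, fully faithful embedding into $\cA$ is exactly what is needed) and the connectedness hypothesis of \Cref{T:RvDBCI}, which you handle by splitting into components, matching the ``indecomposable'' phrasing of the introduction. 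Two caveats on your added steps. The finiteness of the number of components should not rest on ``saturated, hence classically generated by a single object'', which you do not justify and which is not available in the paper; it is easier to argue directly that an infinite direct sum of nonzero finite-type blocks cannot be saturated, e.g.\ because $\bigoplus_i\Hom(A_i,-)$ with $0\neq A_i$ in the $i$-th block is a cohomological functor with finite-dimensional values that no (finitely supported) object can represent. Second, your closing claim that every genus-zero orbifold curve has derived category equivalent to that of a finite acyclic quiver is false in general: this holds only for domestic weight types, while tubular and wild weighted projective lines are not derived equivalent to any hereditary algebra. That remark does not affect the main argument---the mixed case is precisely why the conclusion is best stated componentwise, as you note---but it should be dropped or corrected.
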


\begin{proof}
    Let $\cB$ be an admissible subcategory of $\db(\cA)$ and consider $X,Y$ in the associated heart $\cA \cap \cB$ by \Cref{L:intersectheart}. By \Cref{P:inducedtstructure} one sees $r(X) = X$ and $r(Y[n]) = Y[n]$. Consequently, $\Hom_{\db(\cA\cap \cB)}(X,Y[n]) = \Hom_{\db(\cA)}(X,Y[n])$ for all $n\in \bf{Z}$. It follows that $\rm{hd}(\cA\cap \cB) \in \{0,1\}$. $\cA\cap \cB$ is Noetherian, being a full Abelian subcategory of $\cA$. Finally, saturatedness is a property inherited by semiorthogonal factors \cite{B-KSerre}*{Prop. 2.8} so by \Cref{P:inducedtstructure} we conclude that $\cA\cap \cB$ is of the claimed form.
    
    It remains to observe that each of the categories listed in \Cref{T:RvDBCI} has nontrivial $\rm{K}_0$. This follows from the existence of $\dim: \rm{K}_0(\rep Q)\to \bf{Z}$ and $\rm{rank}:\rm{K}_0(\Coh(\cX)) \to \bf{Z}$, both of which are nonzero. Thus, $\rm{K}_0(\cB)$ has positive rank and $\cB$ is not a phantom category.
\end{proof}


\begin{cor}
\label{C:acyclicquiver}
    Let $Q$ denote a finite acyclic quiver. If $\cB \subset \db(\rep Q)$ is a nontrivial admissible subcategory then $\cB \simeq \db(\rep Q')$ for $Q'$ a finite acyclic quiver.
\end{cor}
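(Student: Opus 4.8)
The plan is to reduce \Cref{C:acyclicquiver} to \Cref{T:mainthm}. That theorem already shows that $\cB$ is the bounded derived category of a category of type (1) or (2) in \Cref{T:RvDBCI}, so it suffices to rule out type (2). The mechanism is that $\rep Q$ is a \emph{finite length} abelian category when $Q$ is finite acyclic, that this property is inherited by the abelian heart of $\cB$ produced in \Cref{P:inducedtstructure}, and that by the argument in the proof of \Cref{L:finitelengthchar} a category of type (2) is never of finite length.

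First I would set up the same heart used in the proof of \Cref{T:mainthm}. Let $\cB\subseteq \db(\rep Q)$ be a nontrivial admissible subcategory and put $\cA' := \rep Q\cap \cB$. By \Cref{L:intersectheart} this is the heart of a bounded t-structure on $\cB$; by \Cref{P:inducedtstructure} there is an exact equivalence $\cB\simeq \db(\cA')$; and by the proof of \Cref{T:mainthm}, $\cA'$ is Noetherian of homological dimension $\le 1$ with $\db(\cA')$ saturated, hence a category of type (1) or (2) in \Cref{T:RvDBCI}. Moreover $\cA'\ne 0$, since otherwise $\cB\simeq \db(0)=0$ would be trivial.

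The heart of the argument is to show that $\cA'$ has finite length. Because $Q$ is finite and acyclic, $kQ$ is a finite dimensional algebra, so every object of $\rep Q$ has a finite composition series and $\rep Q$ is a finite length category. The inclusion $\cA'\hookrightarrow \rep Q$ is fully faithful, and it is exact: a short exact sequence in the heart $\cA'$ is, by definition, a distinguished triangle in $\cB\subseteq \db(\rep Q)$ all of whose vertices lie in the standard heart $\rep Q$, and such a triangle is exactly a short exact sequence in $\rep Q$. A fully faithful exact functor between abelian categories carries a proper subobject to a proper subobject, so any strictly monotone chain of subobjects of an object $M\in \cA'$ is mapped to a strictly monotone chain of subobjects of $M$ in $\rep Q$, which is bounded by the length of $M$ there. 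Hence every object of $\cA'$ has finite length in $\cA'$.

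It remains to note that no category of type (2) is of finite length: for an orbifold curve $\cX$ the chain $\cdots\hookrightarrow \cO_\cX(-2x)\hookrightarrow\cO_\cX(-x)\hookrightarrow \cO_\cX$ appearing in the proof of \Cref{L:finitelengthchar} never stabilizes. Therefore $\cA'$ must be of type (1), i.e.\ $\cA'\simeq \rep Q'$ for a finite acyclic quiver $Q'$, and so $\cB\simeq \db(\rep Q')$. (In the degenerate case where a connected component of $\cA'$ has homological dimension $0$, the last sentence of \Cref{T:RvDBCI} identifies it with $\mathrm{mod}\,k=\rep(\bullet)$, still of type (1); and a finite disjoint union of finite acyclic quivers is again a finite acyclic quiver, so a possibly disconnected $\cA'$ causes no trouble.) I expect the only real obstacle to be the middle paragraph --- verifying that the heart $\cA'$ genuinely inherits finite length from $\rep Q$, which comes down to the fact that $\cA'\hookrightarrow \rep Q$ is fully faithful and exact and hence reflects the Noetherian and Artinian conditions; everything else is bookkeeping on top of \Cref{T:mainthm} and \Cref{L:finitelengthchar}.
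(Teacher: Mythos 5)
Your proposal is correct and follows essentially the same route as the paper: reduce to \Cref{T:mainthm}, observe that the heart $\rep Q\cap\cB$ inherits the finite length property from $\rep Q$, and conclude via \Cref{L:finitelengthchar} that it must be of type (1). The only difference is one of detail --- you spell out why the exact, fully faithful inclusion of the heart reflects finite length and re-run the non-stabilizing-chain argument, where the paper asserts the inheritance in one line and cites \Cref{L:finitelengthchar} directly.
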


\begin{proof}
    By the proof of \Cref{T:mainthm}, $\cB$ is equivalent as triangulated categories to $\db(\rep Q\cap \cB)$, where $\rep Q\cap \cB$ is one of the categories classified in \Cref{T:RvDBCI}. As an Abelian subcategory of $\rep Q$, $\rep Q \cap \cB$ is also of finite length and thus by \Cref{L:finitelengthchar} equivalent to $\db(\rep Q')$ for $Q'$ another finite acyclic quiver.
\end{proof}

If $Q$ is a finite acylic quiver with $n$ vertices, then the simple representations give a full exceptional collection of $\db(\rep Q)$. On the other hand, work of Crawley-Boevey \cite{CrawleyBoevey} implies that the extended mutation action by $\bf{Z}^n \rtimes \mathfrak{B}_n$ on the full exceptional collections in $\db(\rep Q)$ is transitive. \Cref{C:acyclicquiver} implies that every admissible subcategory of $\db(\rep Q)$ admits a full exceptional collection and in particular we obtain a complete classification of the admissible subcategories of $\db(\rep Q)$.

\Cref{T:mainthm} also allows us to classify admissible subcategories of orbifold curves $\cX$ which are \emph{rational} in the sense that $X = \bf{P}^1$. 

\begin{cor}
\label{C:rationalorbifoldindecomp}
    If $\cX$ is rational, then any indecomposable admissible subcategory of $\DCoh(\cX)$ is generated by an exceptional object.
\end{cor}

\begin{proof}
    Given an admissible subcategory $\cA$ of $\DCoh(\cX)$, it follows that $\rm{K}_0(\cA)$ is a summand of $\mathrm{K}_0(\cX)$. As is well-known, $\mathrm{K}_0(\cX)$ is of finite rank (see \Cref{P:SODorbifolds} for example) and thus it follows that if $\cA$ cannot be equivalent to $\DCoh(X)$ for $X$ a curve of positive genus; if not, $\Pic^0(X)$, which is of infinite rank, would be contained as a summand of $\mathrm{K}_0(\cX)$, 
\end{proof}

In particular, every semiorthogonal decomposition of $\DCoh(\cX)$ for $\cX$ rational is refined by a full exceptional collection. The systematic study of derived categories of rational orbifold curves, otherwise called weighted projective lines, is by now classical and was initiated by representation theorists. Consequently, the structure of helices in these categories is well understood. Indeed, \cite{Meltzerexceptional}*{Thm. 3.3.1} states that the braid group action $\mathfrak{B}_n$ on full exceptional sequences in $\Coh(\cX)$ is transitive. Consequently, the induced action by $\bf{Z}^n \rtimes \mathfrak{B}_n$ on full exceptional collections in $\DCoh(\cX)$ is transitive \cite{Meltzerexceptional}*{Cor. 3.3.2}. Combining this with \Cref{C:rationalorbifoldindecomp} we obtain a complete description of the admissible subcategories of $\DCoh(\cX)$ for $\cX$ rational.



\section{Admissible subcategories of orbifold curves}
\label{S:admorbifolds}
In this section, we study the admissible subcategories of smooth and proper orbifold curves as introduced in \Cref{E:stackycurve}. We recall here several relevant facts about orbifold curves that we will use below.

It follows from the definitions that the coarse moduli space $X$ of a smooth and proper orbifold curve $\cX$ is a smooth and proper curve \cite{Taams}*{Lem. 1.1.8}. This implies that the coarse moduli map $\pi:\cX\to X$ is flat and in particular induces an exact functor $\pi^*:\Coh(X)\to \Coh(\cX)$. On the other hand, since the characteristic of our ground field $\bf{C}$ is zero, $\pi$ is tame and $\pi_*:\Coh(\cX)\to \Coh(X)$ is exact \cite{Alper}*{\S 4.4}. In this setting one can also show that the unit of adjunction
\[
    \eta :\id \Rightarrow \pi_*\pi^*
\]
is an isomorphism.

A smooth and proper orbifold curve $\cX$ is uniquely determined by its coarse moduli space $X$ and a finite collection of points $p_1,\ldots, p_n \in X$ with weights $e_1,\ldots, e_n \in \bf{Z}_{\ge 2}$. Indeed, $\cX$ can be obtained as an iterated root stack over the divisors $p_i$, where we let $\cX^0 := X$ and put $\cX^i := \sqrt[e_i]{X/p_i}$ for $i=1,\ldots, n$. It can be shown that $\cX^n \cong \cX$ -- see \cite{Behrend_2014} for details. 

Given a closed point $q\in \cX$, we write $e_q$ for the order of its automorphism group, which is always cyclic; note also that for all but finitely many $q$ one has $e_q = 1$.

The universal property of the root stack construction implies that $\cX$ has a line bundle $L_i = \cO_{\cX}(\tfrac{1}{e_i}\cdot p)$ for each $i = 1,\ldots,n$ with the property that $L_i^{\otimes e_i} \cong \pi^*\cO_{X}(p)$. In this sense, the root stack construction adjoins an $e_i^{\rm{th}}$ root of the line bundle $\cO_X(p_i)$ for all $i=1,\ldots, n$. The Picard group $\Pic(\cX)$ is 
\[
    \Pic(\cX) = \pi^*\Pic(X)[L_1,\ldots,L_n]
\]
subject to the relations $L_i^{\otimes e_i} = \pi^*\cO_X(p_i)$ for $i=1,\ldots,n$ --  see \cite{Behrend_2014}*{p. 122}. It can also be verified that 
\begin{equation}
\label{E:canonicalclass}
    \omega_{\cX} \cong \pi^*\omega_X \otimes \cO_{\cX}\left(\sum_{i=1}^n \frac{e_i-1}{e_i}\cdot p_i\right).
\end{equation}
The reader is encouraged to consult \cite{BGRdimension}*{\S 2} and \cites{Behrend_2014,Taams} for more about orbifold curves and their derived categories. We include several results here that we will use in the sequel. 

First, it is crucial to understand the structure of the category of sheaves on $\cX$ supported at the stacky points of $\cX$. Suppose that $p$ is a stacky point of $\cX$ with order $e_p = n$. We let $\cU_p$ denote the category of coherent sheaves supported at $p$. Here, support is defined as in \cite{Nironi09}. The category $\cU_p$ is equivalent to the category $\rep_0(Z_n)$ of nilpotent representations of a cyclic quiver $Z_n$ with $n$ vertices:
\[
    \begin{tikzcd}
        0\arrow[r] & 1 \arrow[r]& \cdots  \arrow[r]& n-1 \arrow[lll,bend left]
    \end{tikzcd}
\]

Representations of $Z_n$ correspond to $\bf{Z}/n\bf{Z}$-graded vector spaces $V = \bigoplus_{k\in \bf{Z}/n\bf{Z}}V_k$, equip\-ped with a degree $1$ endomorphism $T$. The simple objects of $\cU_p$ are those with exactly one $V_k$ nonzero and isomorphic to $\bf{C}$. We index these as $S_0$ for $k=0,\ldots, n-1$. 

The category $\cU_p$ has an Auslander-Reiten translation functor $\tau$ which amounts reindexing a representation by counterclockwise rotation. More precisely, it takes the corresponding graded vector space $V = \bigoplus_{k \in \bf{Z}/n\bf{Z}} V_k$ to $V(1) = \bigoplus_{k\in \bf{Z}/n\bf{Z}} V_k(1)$ where $V_k(1) = V_{k+1}$ and adjusts the endomorphism accordingly. It follows from this description that $\tau^n = \id$.

\begin{ex} 
    The indecomposable modules can also be explicitly described. We content ourselves to describe the ones of length at most $n$. Given $1\le l \le n-1$, we construct an indecomposable nilpotent representation of $Z_n$, denoted $M_l$, by putting $V_1 = V_2 = \cdots = V_l = \bf{C}$ and all other $V_k = 0$ and letting $V_{i}\to V_{i+1}$ be the identity for $i=0,\ldots, l-1$. We define $M_n$ by putting $V_0 = \cdots = V_{n-1} = \bf{C}$, letting all morphisms $V_i \to V_{i+1}$ be the identity for $i=0,\ldots,n-2$, and letting $V_{n-1}\to V_0$ be the zero morphism. The other indecomposable objects in $\cU_p$ of length $1\le l\le n$ can be obtained from $M_l$ by repeated application of $\tau$.
\end{ex}

As a special case of the above discussion, the subcategory of $\Coh(X)$ consisting of sheaves supported at $\pi(p)$ is equivalent to the category $\rep_0(Z_1)$, i.e. the category of finite dimensional nilpotent representations of the Jordan quiver. The exact functor $\pi_*:\Coh(\cX) \to \Coh(X)$ thus induces a functor $\rep_0(Z_n) \to \rep_0(Z_1)$. 

\begin{lem}
\label{L:pfquiver}
    The functor $\rep_0(Z_n) \to \rep_0(Z_1)$ induced by $\pi_*$ sends $(V = \bigoplus_{k\in \bf{Z}/n\bf{Z}} V_k,T)$ to the vector space $V_0$ equipped with the endomorphism $T^n|_{V_0}$.
\end{lem}

\begin{proof}
    We will not need the full strength of the lemma, so we only sketch its proof. As is proven in \cite{Taams}*{Cor. 1.1.31}, there is a strong local normal form for coarse moduli maps of stacky curves, which is given by the Cartesian diagram of stacks:
    \[
        \begin{tikzcd}
            \left[W/\mu_n\right] \arrow[d]\arrow[r]&\cX\arrow[d,"\pi"]\\
            W/\mu_n \arrow[r]&X
        \end{tikzcd}
    \]
    where the vertical arrows are coarse moduli maps, the bottom horizontal arrow is a Zariski open immersion, and $W$ is a smooth affine curve with a $\mu_n$ action with fixed point $q$. We also denote by $q$ the corresponding points in $[W/\mu_n]$ and $\cX$, and by $\pi(q) = p$ the corresponding point in $X$. It follows that the category $\cU_q$ can be identified with the category of $\mu_n$-equivariant sheaves supported at $q\in W$, or equivalently the category of $\bf{Z}/n\bf{Z}$-graded $\cO_{V,q}$-modules $V = \bigoplus_{k\in \bf{Z}/n\bf{Z}} V_k$. Denote by $x$ a uniformizer of $\cO_{X,p}$ and by $u$ a uniformizer of $\cO_{V,q}$. $u$ acts as the degree $1$ endomorphism of $M$ that we have seen above and the morphism $f:W\to W/\mu_n$ gives an equality $f^*(x) = u^n$ up to a scalar.
    
    Subject to these identifications, $\pi_*$ induces a functor from the category of $\mu_n$-equivariant torsion sheaves supported at $q\in W$ to the category of torsion sheaves supported at $p\in X$ by sending $(V,u) \mapsto (V_0,x)$, where we note that $f^*(x) = u^n$ allows us to identify $x$ with an endomorphism of $V_0$. 
\end{proof}

\begin{cor}
\label{C:pushforwardtorsionsheaf}
    For any $1\le l \le n$, $\pi_*$ sends the coherent sheaf on $\Coh(\cX)$ corresponding to $M_l$ to $\bf{C}_p$.
\end{cor}

\begin{proof}
    This is an immediate consequence of \Cref{L:pfquiver} combined with the observation that $\bf{C}_p$ corresponds to the dimension $1$ representation of $Z_1$ with zero endomorphism. 
\end{proof}

\begin{prop}
\label{P:stackySOD}
    There is a semiorthogonal decomposition $\DCoh(\cX) = \langle \pi^*\DCoh(X), \cE\rangle$, where $\cE$ has a full exceptional collection of sheaves supported at the stacky points of $\cX$.
\end{prop}

\begin{proof}
    This is proved in far greater generality in \cite{Bodzentadonovan}, but we give an argument here in this simple special case. $\pi^*:\DCoh(X) \to \DCoh(\cX)$ is fully faithful, with right adjoint given by $\pi_*$. Since both the source and target categories have Serre functors, denoted $S_X$ and $S_{\cX}$ respectively, a left adjoint $\pi_!\dashv \pi^*$ can be constructed by $\pi_! = S_X^{-1} \circ \pi_* \circ S_{\cX}$. However, an application of the projection formula shows that $\pi_! \simeq \pi_*$. As a consequence, by \cite{stacks-project}*{\href{https://stacks.math.columbia.edu/tag/0CQT}{Tag 0CQT}} there is a semiorthogonal decomposition $\DCoh(\cX) = \langle \pi^*\DCoh(X),\cE\rangle$, where $\cE$ is defined as the complement to $\pi^*\DCoh(X)$.
    
    Next, $\cE$ can be characterized as the kernel of the functor $\pi_*$ and it follows from \Cref{L:intersectheart} that $\cE$ is generated as a triangulated category by its intersection with $\Coh(\cX)$. An object sent to zero by $\pi_*$ must be supported at the points $p_1,\ldots, p_n$ so it suffices to consider what objects in $\cU_{p_i}$ are sent to zero under $\pi_*$ for $i=1,\ldots, n$. \Cref{L:pfquiver} implies that $\ker(\pi_*) \cap \cU_{p_i}$ is generated by the simples $S_1,\ldots, S_{e_i-1} \in \cU_{p_i}$, each of which is exceptional. It follows that the triangulated subcategory $\cE_{p_i}$ generated by $\ker(\pi_*)\cap \cU_{p_i}$ has a full exceptional collection $\langle S_1,\ldots, S_{e_i-1}\rangle$. By disjointness of support, we obtain an orthogonal decomposition $\cE = \cE_{p_1}\oplus \cdots \oplus \cE_{p_n}$ and the result is now proven.
\end{proof}

\begin{rem}
\label{R:An}
    More is true in fact, the objects $S_1,\ldots, S_{e_i-1}$ generate a subcategory of $\Coh(\cX)$ isomorphic to $\rep(A_{e_i-1})$ and in particular $\db(\rep\:A_{e_i-1}) \simeq \cE_{p_i}$ for $i=1,\ldots, n$. In particular, by \cite{CrawleyBoevey} $\bf{Z}^n\rtimes \mathfrak{B}_{e_{i}-1}$ acts transitively on the full exceptional collections of $\cE_{p_i}$ for each $i=1,\ldots, n$.
\end{rem}

\begin{lem}
\label{L:francoslemma}
    Suppose $g(X) \ge 1$ and that $E\in \Coh(\cX)$ fits into a triangle
    \begin{equation}
\label{E:triangle}
    B\to E \xrightarrow{f} A \to B[1]
\end{equation}
such that $\Hom(B,A[i]) = 0$ for all $i\in \bf{Z}$.
    \begin{enumerate}
        \item If $E = \pi^*\bf{C}_q$ for $q\ne p$, then $E$ is a summand of $B$ or $A$.\vspace{2mm}
        \item If $E$ is a line bundle, then either $B \to E$ is zero or $B$ has a line bundle as a summand.
    \end{enumerate}
\end{lem}

\begin{proof}
    Consider the following portion of the long exact sequence of cohomology objects with respect to the standard $t$-structure on $\DCoh(\cX)$:
    \[
        0 \to H^{-1}(A) \to H^0(B) \to E\to H^0(A) \to H^1(B)\to 0.
    \]
    We consider (1) where $E = \pi^*\bf{C}_q$. The map $H^0(B) \to \pi^*\bf{C}_q$ is either surjective or zero. Suppose first that it is surjective so we have 
    \[
        0\to H^{-1}(A) \xrightarrow{h} H^0(B) \to \pi^*\bf{C}_q\to 0.
    \]
    Suppose $h\ne 0$. We may assume that the torsion parts of $H^{-1}(A)$ and $H^0(B)$ away from $q$ are trivial since if not, by \Cref{L:hd1lem}, $h$ would induce an isomorphism between a summand of $A[-1]$ and a summand of $B$, contrary to the Hom-vanishing assumption. So, we may assume that the cohomology objects have torsion only at $q$. Note that by \cite{BGRdimension}*{Lem. 2.11} each $F\in \Coh(\cX)$ splits as $F = F_{\rm{tf}}\oplus F_{\rm{tor}}$ into its torsion free and torsion parts. If $h$ induces a non-trivial map $H^{-1}(A)_{\rm{tor}}\to H^0(B)_{\rm{tor}}$ then 
    \[
        \Hom(H^{-1}(A)_{\rm{tor}},H^0(B)_{\rm{tor}}) \cong \Hom(H^{-1}(A)_{\rm{tor}},H^0(B)_{\rm{tor}}\otimes \omega_{\cX}) \ne 0.
    \]
    It follows that 
    \begin{equation}
    \label{E:SDcontradiction}
        0 \ne \Hom(H^{-1}(A),H^0(B)\otimes \omega_{\cX}) \cong \Hom(H^0(B)[-1], H^{-1}(A))^\vee
    \end{equation}
    by Serre duality and we obtain a contradiction. So, we may suppose that $H^{-1}(A)_{\rm{tor}} = 0$. If $H^0(B)_{\rm{tor}} \ne 0$ then $H^0(B)_{\rm{tor}} \cong \pi^*\bf{C}_q$ and thus $\pi^*\bf{C}_q$ is a summand of $B$ as desired. So, we may assume that $H^{-1}(A)$ and $H^0(B)$ are torsion free and hence locally free. In this case, if $h\ne 0$ then $\Hom(H^{-1}(A),H^0(B)) \ne 0$ and since $\omega_{\cX}$ has global sections it follows that $\Hom(H^{-1}(A),H^0(B)\otimes \omega_{\cX}) \ne 0$. However, duality as in \eqref{E:SDcontradiction} leads to a contradiction.
    
    Suppose next that $H^0(B) \to \pi^*\bf{C}_q$ is zero. We obtain a short exact sequence 
    \[
        0 \to \pi^*\bf{C}_q \to H^0(A)\to H^1(B)\to 0
    \]
    where we may again assume that $H^0(A)$ and $H^1(B)$ have torsion only at $q$. Suppose $\pi^*\bf{C}_q\to H^0(A)_{\rm{tor}}$ is not an isomorphism, since otherwise $\pi^*\bf{C}_q$ is a summand of $A$ and we are done. Then, there is a non-zero morphism $H^0(A)_{\rm{tor}} \to H^1(B)_{\rm{tor}}$ and applying the same argument above using Serre duality we obtain a contradiction. 

    For (2), since $H^0(B)_{\rm{tor}}\to L$ is zero, it follows that $H^{-1}(A)_{\rm{tor}}\to H^0(B)_{\rm{tor}}$ is an isomorphism. Consequently, we may assume that $H^{-1}(A)_{\rm{tor}} = H^0(B)_{\rm{tor}} = 0$. If $H^{-1}(A) \to H^0(B)$ is non-zero, then since these sheaves are locally free sheaves we obtain a non-zero morphism $H^{-1}(A)\to H^0(B)\otimes \omega_{\cX}$ and apply \eqref{E:SDcontradiction} to derived a contradiction. So, $H^{-1}(A) = 0$ and we have $H^0(B) \hookrightarrow L$. So, either $H^0(B)$ is a line bundle or it is zero. We may assume we are in the latter case, and thus we have 
    \[
        0 \to L \to H^0(A) \to H^1(B) \to 0
    \]
    Again, we may assume that $H^0(A)$ and $H^1(B)$ are locally free. If $H^0(A) \to H^1(B)$ is non-zero we obtain the same contradiction as before, so it follows that $L \to H^0(A)$ is an isomorphism and that $H^1(B) = 0$. 

    By \Cref{L:hd1lem}, we have that $B = \bigoplus_{i\in \bf{Z}}H^i(B)[-i]$ and since $\Coh(\cX)$ is of homological dimension $1$, it follows that $\Hom(B,E) = \Hom(H^1(B)[-1],E) \oplus \Hom(H^0(B),E) = 0$.
\end{proof}

We will also need the following basic lemma:

\begin{lem}
\label{L:generatorX}
    Let $q\in X$ and a line bundle $L$ be given. Then, $\bf{C}_q\oplus L$ is a classical generator of $\DCoh(X)$.
\end{lem}

\begin{proof}
    Embed $X \hookrightarrow \bf{P}^N$ using a very ample line bundle of the form $\cO_X(\ell\cdot q)$ and apply \cite{stacks-project}*{\href{https://stacks.math.columbia.edu/tag/0BQT}{Tag 0BQT}} to see that $\cO_X\oplus \cO_X(-\ell \cdot q)\oplus \cdots \oplus \cO_X(-\ell N\cdot  q)$ is a generator of $\Dqc(X)$ and hence $\DCoh(X)$. Now, applying the autoequivalence $(-)\otimes_{\cO_X} L$ of $\DCoh(X)$ shows that $L\oplus L(-\ell \cdot q)\oplus \cdots \oplus L(-\ell N\cdot q)$ is a generator for any $L$. However, since $\fib(L \to \bf{C}_q) = L(-q)$ we see that the subcategory generated by $\bf{C}_q$ and $L$ contains $L(-nq)$ for all $n\ge 0$.
\end{proof}

\begin{prop}
\label{P:SODorbifolds}
    Suppose that $g(X) \ge 1$. Given a semiorthogonal decomposition $\DCoh(\cX) = \langle \cA_1,\cA_2\rangle$, up to mutation and tensoring by a line bundle of the form 
    \begin{equation}
    \label{E:linebundle}
        \cO_{\cX}\left(\sum_{i=1}^n \frac{k_i}{e_i}\cdot p_i\right) \text{ with } 0\le k_i<e_i \text{ for all } i=1,\ldots, n
    \end{equation}
    we have that $\cA_1$ contains $\pi^*\DCoh(X)$ as a full subcategory.
\end{prop}

\begin{proof}
    Let $E$ be an object of $\Coh(\cX)$ and consider the canonical triangle $B \to E \to A$ coming from $\DCoh(\cX) = \langle \cA_1,\cA_2\rangle$. By semiorthogonality, $\Hom(B,A[i]) = 0$ for all $i\in \bf{Z}$ and we may apply \Cref{L:francoslemma}. If $E = \pi^*\bf{C}_q$ for $q \not \in \{p_1,\ldots, p_n\}$, then either $\cA_1$ or $\cA_2$ contains $\pi^*\bf{C}_q$. Up to mutating, we may assume that $\pi^*\bf{C}_q \in \cA_1$ and thus that there are no locally free sheaves in $\cA_2$ by semiorthogonality. However, taking a line bundle $L$ and decomposing it as $B\to L \to A$ like above, we apply \Cref{L:francoslemma} to see that $B\to L$ is zero and thus that $L \in \cA_1$. 
    
    After twisting $\cA_1$ by a line bundle as in \eqref{E:linebundle}, we may assume that $L = \pi^*(L')$ for $L'$ a line bundle on $X$ by the description of $\Pic(\cX)$ in \cite{Taams}*{Cor. 1.2.12}. Next, since $\pi^*:\DCoh(X) \to \DCoh(\cX)$ is fully faithful, it follows that $\cA_1$ contains the essential image of the category generated by $L'$ and $\bf{C}_q$, which by \Cref{L:generatorX} implies that $\cA_1$ contains $\pi^*\DCoh(X)$. 
\end{proof}

\begin{rem}
    Note that \Cref{P:SODorbifolds} contains as a special case Okawa's indecomposability result \cite{Okawaindecomp} for derived categories of smooth and proper curves $X$ of genus at least one. Indeed, if $\cX = X$ is a classical curve of genus at least one, then $\pi = \id$ and $\cA_1 = \DCoh(X)$.
\end{rem} 

\Cref{P:SODorbifolds} combined with \Cref{R:An} gives a satisfactory answer to the question of classification of semiorthogonal decompositions in the case where $g(X) \ge 1$. We will conclude this section by classifying admissible subcategories of $\DCoh(\cX)$ equivalent to $\DCoh(\bf{P}^1)$ in the case where $\cX$ is rational. We first record the following lemma:

\begin{lem}
\label{L:embeddingexact}
    Consider an admissible embedding $i:\DCoh(\bf{P}^1)\hookrightarrow \DCoh(\cX)$. It follows that $i$ embeds $\Coh(\bf{P}^1)$ into $\Coh(\cX)[n]$ for some $n\in \bf{Z}$ and that $g(X) = 0$.
\end{lem}

\begin{proof}
    By \cite{HLJR}*{Prop. 3.1}, $i$ takes objects of $\Coh(\bf{P}^1)$ into a uniformly bounded set of cohomological degrees $[a,b]\cap \bf{Z}$ for $a<b \in \bf{Z}$. Since any line bundle $L$ in $\Coh(\bf{P}^1)$ has $\Hom(L,L)\cong \bf{C}$, it follows that $i(L)$ must be in a shift of $\Coh(\cX)$ by \Cref{L:hd1lem}. Consequently, there are two line bundles $\cO(k)$ and $\cO(n)$ whose images under $i$ lie in the same shift of $\Coh(\cX)$. Thus, by the exact triangles in \cite{GKR}*{Lem. 6.6} it follows that $i$ takes $\Coh(\bf{P}^1)$ to this shift of $\Coh(\cX)$.

    We omit the proof of the second claim, since we will not use it. We note, however, that it follows from the classification of exceptional pairs in $\Coh(\cX)$ for $g(X) \ge 1$. The key fact is that there are no exceptional bundles on $\cX$ -- see \cite{Elaginthick}*{Cor. 8.5}.
\end{proof}

\begin{rem}
    Note that one cannot directly apply \Cref{P:inducedtstructure}, since in this case of $\DCoh(\bf{P}^1) \hookrightarrow \DCoh(\cX)$ it could be the case \emph{a priori} that $\Coh(\cX)$ restricts to the heart $\rep(K_2)$, where $K_2$ is the Kronecker quiver.
\end{rem}

\begin{prop}
\label{P:p1inwhengenus0}
    If $g(X) = 0$, then all admissible embeddings $\DCoh(\bf{P}^1)\hookrightarrow\DCoh(\cX)$ have essential image of the form $\pi^*\DCoh(\bf{P}^1)\otimes L$ for $L$ as in \eqref{E:linebundle}. 
\end{prop}

\begin{proof}
    By \Cref{L:embeddingexact}, we may assume that $i$ induces an embedding $\Coh(\bf{P}^1)\hookrightarrow \Coh(\cX)$ so that $i(\cO)$ and $i(\cO(1))$ form an exceptional pair in $\Coh(\cX)$. Exceptional pairs in $\Coh(\cX)$ are well-studied in the literature, and by \cite{LenzingMeltzer}*{Thm. 3} and its proof it follows that $i(\cO) = \pi^*\cO\otimes L'$ and $i(\cO(1)) = \pi^*\cO(1)\otimes L'$ for $L' \in \Pic(\cX)$. Noting that every $L'\in \Pic(\cX)$ is of the form $L' = \pi^*\cO(n) \otimes L$ for $L$ as in \eqref{E:linebundle}, the result follows.
\end{proof}

\section{Reconstruction theorem}
\label{S:recon}

As a corollary of the previous work, we will explain how an orbifold curve $\cX$ can be recovered from the triangulated category $\DCoh(\cX)$. The objective of this section is to prove the following extension of the Bondal-Orlov reconstruction theorem \cite{BondalOrlovrecon} to the case of orbifold curves. By an \emph{orbifold}, we mean a separated Deligne-Mumford stack with generically trivial stabilizer over an algebraically closed ground field $\bf{C}$ of characteristic zero  -- see \cite{Kresch}*{\S 4.1}.

\begin{thm}
\label{T:BOstyle}
    Suppose $\cX$ is an orbifold curve with $g(X) \ge 1$. If $\cY$ is a smooth and proper orbifold such that there is an exact equivalence $\DCoh(\cY)\simeq \DCoh(\cX)$, then $\cX \cong \cY$. 
\end{thm}

We will first prove some lemmas that will be necessary later. The notion of \emph{point-like} (or simply ``point'') objects was introduced in \cite{BondalOrlovrecon}*{Def. 2.1}. 

\begin{lem}
\label{L:pointlike}
    Suppose that $g(X)\ge 2$ or that $g(X) = 1$ and $\cX$ has at least one stacky point. Then, the point-like objects of $\DCoh(\cX)$ are of the form $\pi^*\bf{C}_q[k]$ where $q\ne p_1,\ldots, p_n$ and $k\in\bf{Z}$.
\end{lem}

\begin{proof}
    Subject to the above hypotheses, if $g(X) \ge 1$ then $\omega_{\cX}$ is ample, in the sense that there exists $k \ge 0$ such that $\omega_{\cX}^{\otimes k} \cong \pi^*L$ for $L$ an ample line bundle on $X$. This can be deduced from \eqref{E:canonicalclass} by taking $k = \rm{lcm}(e_1,\ldots, e_n)$, for example. Now, if $P$ is a point-like object of $\DCoh(\cX)$, then it is indecomposable and hence we may assume that it is in $\Coh(\cX)$. Then, since $P\otimes \omega_{\cX}^{\otimes k} \cong P$ for all $k\ge 0$ we see that $P \otimes \pi^*L \cong P$ for some ample $L$. The projection formula implies that $\pi_*(P) \cong \pi_*(P) \otimes L$. Then, by \cite{HuybrechtsFM}*{p. 93} it follows that the dimension of the support of $\pi_*P$, and hence of that of $P$, is zero.

    So, say that $\pi_*P$ is supported at some point $q\in X$, which is unique by indecomposability. Then, $P$ corresponds to an indecomposable nilpotent representation of the cyclic quiver with $e_q$ vertices. In the case where $e_q=1$, i.e. when $q \ne p_1,\ldots, p_n$, we see that $P = \pi^*\bf{C}_q[k]$ by the condition that $\Hom(P,P) \cong \bf{C}$. Otherwise, we note that $\omega_{\cX}$ restricted to the category $\cU(p_i)$ acts nontrivially as a power of the Auslander-Reiten translation, $\tau^{{e_i}-1}$. In particular, by the classification of the indecomposable objects in $\cU_{p_i}$ it follows that $P\otimes \omega_{\cX} \cong P$ is not possible.
\end{proof}

\Cref{L:pointlike} can also be extended to cover certain cases where $X = \bf{P}^1$, but we will not have need of this.

\begin{proof}[Proof of \Cref{T:BOstyle}]
    First, note that the dimension of $\cX$ can be recovered from $\DCoh(\cX)$ using, for example, Serre dimension by \cite{BGRdimension}*{Cor. 5.2}. Consequently, if $\cY$ is a proper orbifold such that $\DCoh(\cY) \simeq \DCoh(\cX)$ then the coarse space $Y$ must be a smooth and proper curve. Thus, by \cite{Behrend_2014}*{Thm 3.71} it follows that $\cY$ is a gerbe over an orbifold curve. The generically trivial stabilizer hypothesis on $\cY$ implies that it is an orbifold curve. It now remains to show that given a category $\cD$ equivalent to the bounded derived category of an orbifold curve $\cX$, one can determine the curve $\cX$ up to isomorphism from $\cD$.
    
    First, note that it can be determined whether or not $\cX$ is rational by considering whether or not $\rm{K}_0(\cD)$ is of finite rank. Now, if $\cD$ is indecomposable, then by \Cref{P:stackySOD} it follows that $\cX = X$ with $g(X)\ge 1$ and the result follows from \cite{HuybrechtsFM}*{Cor. 5.46}. Otherwise, we define an admissible category $\cA$ of $\cD$ as follows:

    \begin{enumerate}
        \item if $g(X) \ge 1$, then let $\cA$ be a proper indecomposable admissible subcategory which contains no exceptional object; and \vspace{2mm}
        \item if $g(X) = 0$, let $\cA$ be any admissible subcategory equivalent to $\DCoh(\bf{P}^1)$.
    \end{enumerate}
    It now follows that choosing an equivalence $f:\cD \to \DCoh(\cX)$, we have a commutative diagram:
    \[
        \begin{tikzcd}
            \cD \arrow[r,"f"]& \DCoh(\cX)\\
            \cA \arrow[u,hook,"i"] \arrow[r,"\sim"] & \pi^*\DCoh(X) \otimes L\arrow[u,hook] & \DCoh(X) \arrow[l,"\sim",swap]\arrow[ul,dotted,"j"]
        \end{tikzcd}
    \]
    where $L$ is as in \eqref{E:linebundle}, by the proof of \Cref{P:SODorbifolds} when $g(X)\ge 1$ and by \Cref{P:p1inwhengenus0} otherwise. Now, the functor $\pi_*((-)\otimes L^\vee): \DCoh(\cX) \to \DCoh(X)$ can be characterized as the right adjoint to the inclusion $j$ and so there is a 2-commutative diagram:
    \[
        \begin{tikzcd}
            \cD \arrow[r,"f"] \arrow[d,"r",swap]& \DCoh(\cX)\arrow[d,"\pi_*((-)\otimes L^\vee)"]\\
            \cA  \arrow[r,"g"] & \DCoh(X) 
        \end{tikzcd}
    \]
    where $i\dashv r$ and $g$ is the composite equivalence $\cA \to \DCoh(X)$. Next, we define an object $P$ of $\cD$ as follows: 
    \begin{enumerate}
        \item if $g(X) \ge 1$, choose any point-like object in $\cD$; and \vspace{2mm}
        \item if $g(X) = 0$, choose the image under $i:\cA \hookrightarrow \cD$ of any point-like object. 
    \end{enumerate}
    In what follows, we sometimes omit $f$ and $g$ from the notation. In the first case, \Cref{L:pointlike} implies that up to shift $P = \pi^*\bf{C}_q$ for $q\ne p_1,\ldots, p_n$ and then $r(P) = \pi_*(\pi^*\bf{C}_q\otimes L^\vee) = \bf{C}_q$. In the second case, simply note that $r \circ i\simeq \id_{\cA}$, so that $r(P) = \bf{C}_q$ by the characterization of point-like objects of $\DCoh(\bf{P}^1)$ in \cite{BondalOrlovrecon}*{Prop. 2.2}.

    Next, let $v$ denote the class of $r(P)$ in the numerical Grothendieck group $\cN(\cA)$ of $\cA$. Choose a Bridgeland stability condition $\sigma$ on $\cA$ for which $\bf{C}_q$ is stable of phase $\phi = 1$, which can be obtained by pulling back slope stability on $\DCoh(X)$ as constructed in \cite{Br07}*{Ex. 5.6}. Then, the moduli space $\cM_\sigma(v)$ of stable objects in $\cA$ of class $v$ with respect to $\sigma$ is isomorphic to $X$. In particular, closed points of $\cM_\sigma(v)$ correspond to closed points of $X$, where the point $q\in X$ corresponds to $g^{-1}(\bf{C}_q)$. Thus, we have recovered the coarse moduli space $X$ up to isomorphism from $\cD$ and it remains to determine only which points of $\cM_\sigma(v)$ correspond to points of $X$ lying below orbifold points of $\cX$. We identify $\bf{C}_q$ with its corresponding object in $\cA$.

    Given $\bf{C}_q \in \cA$, consider the full subcategory $\cC_q\subseteq \cD$ generated under taking cones and extensions by all indecomposable objects $E$ such that $r(E) \cong \bf{C}_q$. $f(\cC_q)$ is equivalent to the full subcategory of $\DCoh(\cX)$ generated under cones and extensions by indecomposable objects $E$ such that $\pi_*(E\otimes L^\vee) \cong \bf{C}_q$, which is contained in $\cU_q$ -- indecomposability removes any summands in other cohomological degrees that are sent to zero by $\pi_*$. In particular, $f(\cC_q) \subseteq \cU_q\subseteq \Coh(\cX)$.
    
    We claim that $f(\cC_q) = \cU_q$. Since twisting by $L$ induces an autoequivalence of $\cU_q$, we may assume that $L = \cO_{\cX}$. In this case, by \Cref{C:pushforwardtorsionsheaf} the indecomposable objects $M_i$ of $\cU_q$ for $i=1,\ldots, n$ are contained in $\cC_q$ and so the simple objects $S_i$ can be obtained as cones of the natural maps $M_{i-1} \to M_i$ for $i=1,\ldots, n$. Consequently, $\cC_q = \cU_q$. Now, given $\bf{C}_{q} \in \cM_\sigma(v)$ we observe that $\rk \mathrm{K}_0(\cC_q)$ equals $e_q$. This is sufficient to recover $\cX$ up to isomorphism.
\end{proof}

\begin{rem}
    The arguments above crucially use the fact that $\cX$ is a curve throughout. Nevertheless, it would be interesting to see to what class of smooth and proper Deligne-Mumford stacks $\cY$ the Bondal-Orlov reconstruction theorem can be extended. 
    
    One might predict that for $\cY$ a root stack over a smooth projective variety $Y$ with $\omega_Y$ ample or anti-ample, the result may hold. However, if for $n\ge 2$ one takes $Y = \bf{P}^n$ and chooses a smooth degree $n+1$ hypersurface $Z$, then $Z$ is Calabi-Yau and outside of the scope of the Bondal-Orlov reconstruction theorem \cite{BondalOrlovrecon}. Consequently, taking $\cY$ to be the $n^{\rm{th}}$ root stack of $Y$ over $Z$, it seems implausible that one can recover $\cY$ from $\DCoh(\cY)$. 
\end{rem}

The following could be a worthwhile question for further investigation:

\begin{quest}
    For which classes of smooth and proper Deligne-Mumford stacks $\cY$ is it possible to prove a reconstruction theorem analogous to \Cref{T:BOstyle}?
\end{quest}

\appendix

\section{Alternative proof of nonexistence of phantoms}
\label{A:appendix}

In this appendix, we give an elementary argument for the nonexistence of phantoms for noncommutative curves from an earlier version of this paper. The merit of this argument is that the proof is logically independent of \cite{ReitenvDB} and thus more elementary.

\begin{thm}
    If $\cA$ is a noncommutative curve then $\db(\cA)$ contains no phantom subcategories. 
\end{thm}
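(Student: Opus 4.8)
The plan is to argue directly, without invoking the Reiten--van den Bergh classification, that an admissible subcategory $\cB \subset \db(\cA)$ with $K_0(\cB) = 0$ must be zero. As in the proof of \Cref{T:mainthm}, the first step is to pass from $\cB$ to its heart $\cH := \cA \cap \cB$, which by \Cref{L:intersectheart} is the heart of a bounded t-structure on $\cB$, and by \Cref{P:inducedtstructure} satisfies $\cB \simeq \db(\cH)$ with $\rm{hd}(\cH) \in \{0,1\}$ and $\cH$ Noetherian. Since $K_0$ is insensitive to passing to the bounded derived category, $K_0(\cH) = K_0(\cB) = 0$. So it suffices to prove: a Noetherian Abelian category $\cH$ of homological dimension $\le 1$ which arises as the heart of an admissible subcategory of $\db(\cA)$ and has $K_0(\cH) = 0$ must be the zero category. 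The natural strategy is to show $\cH$ contains a nonzero simple object $S$ and then that $[S] \ne 0$ in $K_0(\cH)$, giving a contradiction unless $\cH = 0$.

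For the first part, I would use Noetherianity of $\cH$: if $\cH \ne 0$, pick any nonzero $X \in \cH$; the ascending chain condition on subobjects forces a nonzero subobject to be simple (a minimal nonzero subobject exists, or one descends finitely — care is needed since Noetherian alone gives ACC, so I would instead observe that $\cH$ is a full subcategory of $\cA$ and inherits the finite-length property on the relevant subquotients from $\cA$ being a noncommutative curve, or more carefully: admissible subcategories of $\db(\cA)$ are themselves saturated by \cite{B-KSerre}*{Prop. 2.8}, and a saturated hereditary-or-semisimple Noetherian heart is finite-length — this is the technical crux). Granting a simple $S \in \cH$, the second part is the heart of the argument: I must show $[S]$ is not a torsion element, equivalently nonzero, in $K_0(\db(\cA))$-image or detected by a suitable homomorphism out of $K_0(\cH)$. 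Here I would invoke that $\cB$ being admissible in $\db(\cA)$ means $K_0(\cB)$ is a direct summand of $K_0(\db(\cA)) = K_0(\cA)$, and then use the Euler pairing: since $\cA$ is a noncommutative curve, $\db(\cA)$ is saturated, hence has a Serre functor and a well-defined (possibly degenerate) Euler form $\chi(-,-)$ on $K_0$. One computes $\chi(S, S) = \dim \Hom(S,S) - \dim \Ext^1(S,S)$, which need not be nonzero; so instead I would pair $[S]$ against classes in $K_0(\cA)$ coming from "ample-like" objects or from a tilting/generating object, exploiting $\rm{hd}(\cA) = 1$ to control the pairing.

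The cleanest route, and the one I expect to actually use, is: take a nonzero object $E \in \cB$ (e.g. build one as an iterated extension of shifts of the simple $S$, or just any nonzero object), and consider the right adjoint $\delta: \db(\cA) \to \cB$ of the inclusion (which exists by admissibility). Then for a suitable $G \in \db(\cA)$ with $[G]$ generating enough of $K_0(\cA)$ — concretely, using that $\cA = \rep Q$ or $\Coh(\cX)$ has a classical generator $G$, though to stay independent of the classification I would instead use that saturatedness gives a strong generator — the class $[\delta G] \in K_0(\cB)$ is the image of $[G]$ under the split projection $K_0(\db\cA) \twoheadrightarrow K_0(\cB)$, so if $K_0(\cB) = 0$ then the projection is zero, meaning $K_0(\cB)$ is a zero summand and $\cB \hookrightarrow \db(\cA)$ induces the zero map on $K_0$. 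The final step: I claim a nonzero admissible (hence saturated, hence with Serre duality) triangulated category $\cB$ of the form $\db(\cH)$ with $\cH$ Noetherian of homological dimension $\le 1$ cannot have $K_0 = 0$ — because such $\cH$ necessarily contains a simple $S$ with $\mathrm{End}(S)$ a division ring, and the length function $\ell: \cH \to \mathbb{Z}_{\ge 0}$ counting multiplicity of $S$ in a composition series descends to a nonzero additive map $K_0(\cH) \to \mathbb{Z}$, provided $\cH$ is finite-length. So the entire argument reduces to one point, which I flag as the main obstacle: \textbf{showing that the heart $\cH = \cA \cap \cB$ of an admissible subcategory is of finite length}, i.e. that it has no infinite descending chains. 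This should follow from $\cH$ being Noetherian of homological dimension $\le 1$ together with saturation (which provides a Serre functor, hence symmetry between ascending and descending chain behavior via duality); making that implication precise — presumably by showing that in a saturated, Noetherian, hereditary heart every object has finite length, perhaps by first reducing to the case where $\cH$ has a projective generator or by a direct dévissage using the bound on $\Ext^1$ — is where the real work lies, and is the step I would expect the author's proof to spend the most effort on.
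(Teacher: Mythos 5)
Your reduction to the heart is fine (and matches the paper's setup): if $\cB$ is a phantom, then by \Cref{L:hd1lem} and \Cref{L:admissiblethick} any nonzero object of $\cB$ produces a nonzero object of $\cH=\cA\cap\cB$, and every object of $\cH$ has trivial class in $K_0(\cA)$, so it suffices to show such an $\cH$ must vanish. But the step you yourself flag as the crux --- that the heart $\cH$ of an admissible subcategory is of \emph{finite length}, so that a composition-series multiplicity function $\ell:\cH\to\bZ_{\ge 0}$ detects nonzero objects --- is not just unproven, it is false. Take $\cB=\db(\cA)$ itself (which is admissible) with $\cA=\Coh(\cX)$ for any projective orbifold curve, even $\bP^1$: then $\cH=\cA$ is Noetherian, hereditary and saturated, yet the chain $\cdots\subset\cO_{\cX}(-2x)\subset\cO_{\cX}(-x)\subset\cO_{\cX}$ never stabilizes, so $\cH$ is not finite length; this is exactly the observation in the proof of \Cref{L:finitelengthchar}. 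In particular your heuristic that saturation/Serre duality forces ``symmetry between ascending and descending chain behavior'' fails, and the length function you need does not exist, so the proposed final contradiction is never reached. (Your earlier fallback ideas --- Euler pairing against $[S]$, or pairing against a generator --- are left too vague to fill this hole: $\chi(S,S)$ can be anything, and nothing is done with the generator.)

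The paper avoids this issue entirely by never analyzing the structure of the heart. It introduces the notion of a \emph{visible} Abelian category ($[X]=0$ in $K_0(\cA)$ implies $X=0$) and proves in \Cref{L:mainlemma} that visibility of the \emph{ambient} category $\cA$, together with $\rm{hd}(\cA)\le 1$, rules out phantoms: a nonzero object of a phantom would give a nonzero object of $\cA\cap\cB$ whose class in $K_0(\cA)$ vanishes. Visibility is then checked case by case: for finite-length categories (hence $\rm{mod}\:A$, $A$ hereditary) via Jordan--H\"older multiplicities (\Cref{L:JordanHolder}), and for $\Coh(\cX)$ with $\cX$ a projective orbifold curve via the Hilbert polynomial of $F_{\cE}(\cF)=\pi_*\mathcal{H}\rm{om}(\cE,\cF)$ for a generating sheaf $\cE$ (\Cref{P:projectiveDMfact}), which handles precisely the non-finite-length case your argument cannot. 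If instead you are willing to use the Reiten--van den Bergh classification (which you wanted to avoid), the route of \Cref{T:mainthm} is the correct version of your idea: show $\cH$ is itself a noncommutative curve and detect $K_0(\cH)\ne 0$ by $\dim$ or $\rank$ rather than by a length function.
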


\begin{defn}
    An Abelian category $\cA$ is called \emph{visible} if for all $X\in \cA$, $[X] = 0\in \rm{K}_0(\cA)$ implies $X = 0$.  
\end{defn}

The following simple lemma is the main observation.

\begin{lem}
\label{L:mainlemma}
     If $\cA$ is a visible Abelian category with $\rm{hd}(\cA) \in \{0,1\}$, then $\db(\cA)$ contains no phantom subcategories. 
\end{lem}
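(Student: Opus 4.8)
The plan is to transport the phantom condition $K_0(\cB)=0$ down to the induced heart and then let visibility do the work. Fix an admissible subcategory $\cB\subseteq\db(\cA)$ with $K_0(\cB)=0$; I want to show $\cB=0$. By \Cref{L:intersectheart} the full subcategory $\cH:=\cA\cap\cB$ is the heart of a bounded t-structure on $\cB$, and by \Cref{P:inducedtstructure} there is an exact equivalence $r\colon\db(\cH)\xrightarrow{\sim}\cB$. Since $K_0$ is a triangulated invariant, $K_0(\cH)\cong K_0(\db(\cH))\cong K_0(\cB)=0$, where the first isomorphism is the standard one attached to the standard t-structure on $\db(\cH)$: it sends $[X]$, for $X$ in degree $0$, to $[X]$, with inverse $[E]\mapsto\sum_i(-1)^i[H^i(E)]$. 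Hence $[X]=0$ in $K_0(\cH)$ for every object $X$ of $\cH$.

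Next I would upgrade this vanishing from $K_0(\cH)$ to $K_0(\cA)$. The one point to check is that the inclusion $\cH\hookrightarrow\cA$ is exact, i.e.\ carries short exact sequences of $\cH$ to short exact sequences of $\cA$: a short exact sequence $0\to X\to E\to Y\to 0$ in $\cH$ is, by definition of the heart, a distinguished triangle $X\to E\to Y\to X[1]$ in $\cB$ with all three vertices lying in $\cH\subseteq\cA$; since $\cB\subseteq\db(\cA)$ is a full triangulated subcategory this is a distinguished triangle in $\db(\cA)$, and a triangle with all vertices in $\cA$ is a short exact sequence of $\cA$ by the long exact cohomology sequence of the standard t-structure on $\db(\cA)$. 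Therefore $[X]\mapsto[X]$ is a well-defined homomorphism $K_0(\cH)\to K_0(\cA)$, so $[X]=0$ in $K_0(\cA)$ for every $X\in\cH$. Visibility of $\cA$ then forces $X=0$, whence $\cH=0$ and $\cB\simeq\db(\cH)=0$. In other words the only phantom subcategory of $\db(\cA)$ is zero.

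Honestly this lemma is nearly formal once one has \Cref{L:intersectheart} and \Cref{P:inducedtstructure} in hand; the only places where I would be careful in writing it up are the (standard) identification $K_0(\db(\cH))\cong K_0(\cH)$, which one may simply cite, and the exactness of the heart inclusion $\cH\hookrightarrow\cA$ sketched above. It seems worth emphasizing that this route to non-existence of phantoms uses only the structural input that $\rm{hd}(\cA)\in\{0,1\}$ makes admissible subcategories of $\db(\cA)$ themselves bounded derived categories of abelian categories, and never invokes the Reiten--van den Bergh classification \cite{ReitenvDB} that powers the proof of \Cref{T:mainthm}.
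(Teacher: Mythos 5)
Your proof is correct, and it differs from the paper's own argument mainly in which intermediate results it leans on. The paper proves \Cref{L:mainlemma} with lighter machinery: instead of invoking \Cref{L:intersectheart} and \Cref{P:inducedtstructure} to identify $\cB$ with $\db(\cA\cap\cB)$ and then transporting $K_0(\cB)=0$ to $K_0(\cA\cap\cB)$, it works from the start with the natural map $K_0(\cB)\to K_0(\cA)$ induced by the inclusion $\cB\subset\db(\cA)$: phantomness forces every object of $\cA\cap\cB$ to have trivial class in $K_0(\cA)$, so visibility gives $\cA\cap\cB=0$; and if $\cB$ contained a nonzero object $X$, the splitting $X\cong\bigoplus_i H^i(X)[-i]$ of \Cref{L:hd1lem} together with closure under direct summands (\Cref{L:admissiblethick}) would produce a nonzero object of $\cA\cap\cB$, a contradiction. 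Your detour through the equivalence $\cB\simeq\db(\cA\cap\cB)$ buys a cleaner final step ($\cA\cap\cB=0$ immediately gives $\cB\simeq\db(0)=0$), but at the cost of citing the realization functor of \Cref{P:inducedtstructure} and of the well-definedness check for $K_0(\cA\cap\cB)\to K_0(\cA)$ (the exactness of the heart inclusion), which you do verify correctly; the paper's route needs neither, since it never forms $K_0(\cA\cap\cB)$ at all. Both arguments are logically independent of \cite{ReitenvDB}, as you note, so your version is a valid, if slightly heavier, substitute.
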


\begin{proof}
    Suppose $\cB\subset \db(\cA)$ is an admissible subcategory. Let $X\in \cB$ be given and decomposed as in \Cref{L:hd1lem}. If $X\ne 0$, then there exists an $i$ such that $H^i(X)\ne 0$ and by \Cref{L:admissiblethick} one has $\cA\cap \cB\ne 0$. If $\cB$ were a phantom then its objects would have zero image in $\rm{K}_0(\cA)$. In particular, any $X\in \cA\cap \cB$ must have trivial class in $\rm{K}_0(\cA)$. By hypothesis, this implies $\cA \cap \cB = 0$ and hence that $\cB = 0$.
\end{proof}

We next prove that examples of interest satisfy the visibility condition.  

\begin{lem}
\label{L:JordanHolder}
    If $\cA$ is a finite length Abelian category then $\cA$ is visible.
\end{lem}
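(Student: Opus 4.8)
The plan is to use the Jordan--Hölder property of finite length Abelian categories, which is exactly the content recalled in the definition of a finite length category: every object $X \in \cA$ admits a finite filtration whose successive quotients are simple, and the multiset of isomorphism classes of these simple subquotients is an invariant of $X$ (independent of the filtration). First I would recall the standard fact that for such $\cA$, the group $K_0(\cA)$ is the free Abelian group on the set of isomorphism classes of simple objects: one direction is that each $[S]$ for $S$ simple is manifestly a generator (by additivity along the Jordan--Hölder filtration, $[X] = \sum_i [S_i]$), and the freeness follows because the map sending $X$ to the formal sum of its Jordan--Hölder factors $\sum_{[S]} m_S(X)\,[S]$ is well-defined by the independence clause (2) in the definition of finite length, is additive on short exact sequences, and hence descends to an inverse to the natural surjection from the free group.

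With this in hand, the argument is immediate. Suppose $X \in \cA$ satisfies $[X] = 0 \in K_0(\cA)$. Writing $[X] = \sum_{[S]} m_S(X)\,[S]$ in the free Abelian group on simples, the vanishing of $[X]$ forces $m_S(X) = 0$ for every simple $S$. But $m_S(X)$ is the length of the Jordan--Hölder filtration of $X$ counted with the multiplicity of $S$; if all these multiplicities vanish then the filtration $0 = A_0 \subset \cdots \subset A_n = X$ has no nonzero simple subquotients, which is only possible if $n = 0$, i.e. $X = A_0 = 0$. Hence $\cA$ is visible.

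I do not expect a genuine obstacle here; the only point requiring care is to make sure the ``free on simples'' description of $K_0$ is invoked correctly, which is why I would phrase the proof through the well-definedness of the multiplicity function $m_S$ rather than by an ad hoc argument. One could alternatively cite a reference for $K_0$ of a finite length (equivalently, length) category being free on the simple objects (e.g. the Stacks project tag already used for Jordan--Hölder, or a standard reference on Grothendieck groups), in which case the proof reduces to the one-line observation that a vanishing element of a free Abelian group has all coordinates zero, forcing length zero.
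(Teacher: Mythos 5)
Your proposal is correct and follows the same route as the paper: both identify $K_0(\cA)$ with the free Abelian group on the simple objects via the Jordan--H\"older multiplicities and conclude that $[X]=0$ forces all multiplicities, hence $X$ itself, to vanish. Your write-up simply spells out the well-definedness of the multiplicity function in more detail than the paper, which states the freeness without elaboration.
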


\begin{proof}
    $\rm{K}_0(\cA) = \bigoplus_{S\:\text{simple}} \bf{Z}\cdot [S]$ so that for all $M\in \cA$, $[M] = \sum_{i=1}^k n_i[S_i]$ where $S_1,\ldots,S_k$ are the simples appearing in any composition series of $M$ with multiplicities $n_1,\ldots, n_k$, respectively.
\end{proof}

\begin{cor}
\label{C:hereditaryphantoms}
    If $A$ is a hereditary finite dimensional algebra over a field $k$ then $\db(\rm{mod}\:A)$ contains no phantom subcategory. 
\end{cor}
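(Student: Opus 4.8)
The plan is to read this off directly from \Cref{L:mainlemma}, so the only work is to confirm that $\rm{mod}\:A$ satisfies the two hypotheses of that lemma: homological dimension in $\{0,1\}$ and visibility.

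First I would record that $\rm{hd}(\rm{mod}\:A) \in \{0,1\}$. This is precisely the observation in \Cref{E:hereditaryalgebra}: since $A$ is hereditary, the kernel of any surjection from a finitely generated projective onto a finite dimensional module $M$ is again projective, so $M$ admits a two-term projective resolution. Hence $\Ext^n_{\rm{mod}\:A}(M,N)$ vanishes for $n\ge 2$, and $\rm{hd}(\rm{mod}\:A)\le 1$. Next I would check visibility. As in the proof of \Cref{L:finitelengthchar}, $\rm{mod}\:A$ is a finite length Abelian category, since any strictly ascending or descending chain of submodules of a fixed finite dimensional module strictly changes the $k$-dimension and hence must terminate; both the ascending and descending chain conditions follow. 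Then \Cref{L:JordanHolder} applies and shows $\rm{mod}\:A$ is visible.

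With both hypotheses verified, \Cref{L:mainlemma} applies verbatim and gives that $\db(\rm{mod}\:A)$ contains no phantom subcategory. There is no real obstacle here: the content is entirely contained in the preceding lemmas, and this corollary only requires assembling the two inputs. (I would not invoke \Cref{T:mainthm} or \Cref{C:acyclicquiver} for this, since the point of the section is to give an argument independent of \cite{ReitenvDB}.)
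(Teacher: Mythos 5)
Your proposal is correct and follows exactly the paper's route: the paper's proof of \Cref{C:hereditaryphantoms} also observes that $\rm{mod}\:A$ is finite length and then applies \Cref{L:JordanHolder} together with \Cref{L:mainlemma}, with the bound $\rm{hd}(\rm{mod}\:A)\le 1$ coming from \Cref{E:hereditaryalgebra} just as you note. You simply spell out the two hypothesis checks in slightly more detail than the paper does.
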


\begin{proof}
    $\rm{mod}\:A$ is finite length so we apply \Cref{L:JordanHolder} and \Cref{L:mainlemma}. 
\end{proof}

\begin{rem}
    Note that the hereditary hypothesis is necessary. Indeed, recent work of Krah \cite{krah2023phantom} which constructs phantom subcategories of $\DCoh(X)$ for $X$ the blow-up of $\bf{P}^2$ in 10 points proves that there exist (non-hereditary) finite dimensional algebras $A$ such that $\db(\rm{mod}\:A)$ contains a phantom subcategory \cite{kalck2023finite}.
\end{rem}

\begin{prop}
\label{P:projectiveDMfact}
    If $\cX$ is a projective Deligne-Mumford stack, then there exists a group homomorphism $P:\rm{K}_0(\cX)\to \bf{Q}[t]$ such that for $G\in \Coh(\cX)$, $P(G) = 0$ implies $G = 0$.
\end{prop}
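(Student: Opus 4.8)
The plan is to construct a \emph{modified Hilbert polynomial} for $\cX$, following Nironi's theory of sheaves on projective Deligne--Mumford stacks. Let $\pi\colon\cX\to X$ be the coarse moduli map. Since $\cX$ is projective in the sense of \Cref{E:stackycurve}, it is a quotient stack $[Y/G]$ with $Y$ quasi-projective and $G$ reductive (\cite{Kresch}), so $X$ is a projective variety; fix a very ample line bundle $\cO_X(1)$. Because $\mathrm{char}\,k=0$, the pushforward $\pi_*$ is exact and $R^{>0}\pi_*=0$ (\'etale-locally $\pi$ is of the form $[V/H]\to V/H$ for a finite group $H$, and averaging kills the higher cohomology of $H$). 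Moreover, by the theory of Olsson--Starr there is a \emph{generating sheaf} $\cE$: a locally free sheaf on $\cX$ such that the adjunction counit $\pi^*\pi_*(\cE^\dual\otimes\cF)\otimes\cE\to\cF$ is surjective for every $\cF\in\Coh(\cX)$. In particular, $\cF\neq0$ forces $\pi_*(\cE^\dual\otimes\cF)\neq0$.

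Granting this, I would set $P$ equal to the composite $K_0(\cX)\to K_0(X)\to\bQ[t]$, where the first arrow is induced by the exact functor $\cF\mapsto\pi_*(\cE^\dual\otimes\cF)$ and the second sends a class $[\cH]$ to its Hilbert polynomial $t\mapsto\chi(X,\cH\otimes\cO_X(t))$. The second arrow is a well-defined homomorphism into $\bQ[t]$ since $\chi$ is additive in short exact sequences and the Hilbert polynomial of a coherent sheaf on a projective variety is a numerical polynomial; hence $P$ is a group homomorphism. (By the projection formula, $P(\cF)(t)=\chi(\cX,\cE^\dual\otimes\cF\otimes\pi^*\cO_X(t))$, which is Nironi's modified Hilbert polynomial.)

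For the faithfulness claim, fix $\cF\neq0$ and put $\cH:=\pi_*(\cE^\dual\otimes\cF)$, a nonzero coherent sheaf on $X$ by the generating property. For $t\gg0$, Serre's theorems give $H^{>0}(X,\cH\otimes\cO_X(t))=0$ while $\cH\otimes\cO_X(t)$ is globally generated and nonzero; hence $P(\cF)(t)=\dim_k H^0(X,\cH\otimes\cO_X(t))>0$, so $P(\cF)\neq0$ in $\bQ[t]$. This is the contrapositive of the statement.

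I expect the only genuine subtlety to be the appeal to the generating sheaf in the faithfulness step: the bare pushforward $\pi_*$ is not faithful, as it annihilates, for instance, nontrivial character sheaves on residual gerbes, so one cannot simply pull back the Hilbert polynomial along $\pi$; replacing $\cF$ by $\cE^\dual\otimes\cF$ before pushing forward is precisely Nironi's mechanism for restoring faithfulness, after which the argument reduces to the classical behaviour of Hilbert polynomials and Serre vanishing on the projective variety $X$. The remaining points --- exactness of $\pi_*$ in characteristic zero, existence of $\cE$, and polynomiality of the Hilbert function --- are standard and can be quoted.
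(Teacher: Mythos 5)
Your proposal is correct and follows essentially the same route as the paper: both construct Nironi's modified Hilbert polynomial by pushing $\cE^\dual\otimes\cF=\mathcal{H}\mathrm{om}_{\cO_\cX}(\cE,\cF)$ forward along the coarse moduli map for a generating sheaf $\cE$ and then taking the Hilbert polynomial on $X$. The only difference is cosmetic: where the paper cites Nironi's Lemma 3.4 for exactness and faithfulness of this functor, you verify these directly (exactness of $\pi_*$ in characteristic zero and faithfulness from the counit surjectivity defining a generating sheaf), and you spell out via Serre vanishing why a nonzero coherent sheaf has nonzero Hilbert polynomial.
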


\begin{proof}
    Let $\pi: \cX\to X$ denote the coarse moduli space map of $\cX$. For a locally free generating sheaf $\cE$ on $\cX$, which exists by the projectivity assumption \cite{Kresch}*{Thm. 5.3}, there is a functor $F_{\cE} :\QCoh(\cX)\to \QCoh(X)$ given by $G\mapsto \pi_*\mathcal{H}\rm{om}_{\cO_{\cX}}(\cE,G)$ \cite{Nironi09}*{Defn. 2.4}. As $\pi$ is proper, $F_{\cE}$ induces a functor $\Coh(\cX)\to \Coh(X)$. 
    
    $F_{\cE}$ is exact and for all $G\in \Coh(\cX)$, $F_{\cE}(G) = 0$ if and only if $G = 0$ by \cite{Nironi09}*{Lem. 3.4}. Fix a very ample line bundle $\cO_X(1)$ on $X$. For $G\in \Coh(X)$ let $G(m):= G\otimes_{\cO_X} \cO_X(1)^{\otimes m}$. Using this, one defines $P_{\cE}(G,m):= \chi(X,F_{\cE}(G)(m))$, which is a rational polynomial function of $m\in \bf{Z}$ and defines a polynomial $P_{\cE}(G,t)\in \bf{Q}[t]$. This gives a homomorphism $P_{\cE}(-):\rm{K}_0(\cX) \to \bf{Q}[t]$. Now, $P_{\cE}(G,t)\in \bf{Q}[t]$ is the Hilbert polynomial of $F_\cE(G)\in \Coh(X)$; so, if $P_{\cE}(G,t) = 0$, then $F_{\cE}(G) = 0$ and hence $G = 0$.
\end{proof}

\begin{cor}
\label{C:stackycurves}
    If $\cX$ is a projective orbifold curve, then $\Coh(\cX)$ is visible. Consequently, $\DCoh(\cX)$ contains no phantom subcategories. 
\end{cor}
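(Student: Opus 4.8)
The plan is to obtain this as a two-step assembly of results already established: \Cref{P:projectiveDMfact} provides a ``visibility detector'' on $K_0$, and \Cref{L:mainlemma} converts visibility into nonexistence of phantoms. First I would observe that a projective orbifold curve $\cX$ is in particular a projective Deligne--Mumford stack in the sense of \Cref{E:stackycurve}, so \Cref{P:projectiveDMfact} applies and supplies a group homomorphism $P:K_0(\cX)\to\bQ[t]$ with the property that $P(\cF)=0$ forces $\cF=0$ for every $\cF\in\Coh(\cX)$.

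Next I would verify that $\Coh(\cX)$ is visible. Suppose $X\in\Coh(\cX)$ satisfies $[X]=0\in K_0(\cX)$. Since $P$ is additive, $P(X)=P([X])=P(0)=0$, and therefore $X=0$ by the defining property of $P$ from \Cref{P:projectiveDMfact}. Thus $[X]=0$ implies $X=0$, which is precisely the definition of visibility for $\Coh(\cX)$.

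Finally, to conclude the statement about phantoms, I would recall from \Cref{E:stackycurve} that $\cX$ is a noncommutative curve, so $\rm{hd}(\Coh(\cX))=1$; in particular $\rm{hd}(\Coh(\cX))\in\{0,1\}$. Having just shown $\Coh(\cX)$ is visible, \Cref{L:mainlemma} applies directly and gives that $\DCoh(\cX)=\db(\Coh(\cX))$ contains no phantom subcategories.

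Since both ingredients are already proven, there is no substantive obstacle; the one point that deserves a moment's care is confirming that ``projective orbifold curve'' is subsumed by the ``projective Deligne--Mumford stack'' hypothesis of \Cref{P:projectiveDMfact}, which is immediate from the definitions recalled in \Cref{E:stackycurve} (and from the fact that an orbifold curve is one-dimensional, so \Cref{P:projectiveDMfact} indeed applies to it).
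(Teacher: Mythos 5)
Your proposal is correct and follows exactly the paper's argument: visibility comes from the homomorphism $P$ of \Cref{P:projectiveDMfact} (additivity gives $[\cF]=0\Rightarrow P(\cF)=0\Rightarrow \cF=0$), and then \Cref{L:mainlemma} rules out phantoms, with the $\rm{hd}\in\{0,1\}$ hypothesis supplied by \Cref{E:stackycurve}. No differences worth noting.
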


\begin{proof}
$\Coh(\cX)$ is visible by \Cref{P:projectiveDMfact}; indeed, for $F\in \Coh(\cX)$, if $[F] = 0$, then $P_{\cE}(F) = 0$ and $F = 0$. We conclude by \Cref{L:mainlemma}.   
\end{proof}

\begin{rem}
    The condition of visibility essentially means that there are enough additive functions on $\mathrm{K}_0(\cA)$ to determine when an object is zero. When $X$ is a smooth curve which is not proper, visibility may no longer hold. For example, for $X = \bf{A}^1$ the short exact sequence 
    \[
        0\to \cO_{\bf{A}^1} \to \cO_{\bf{A}^1} \to \bf{C}_q \to 0
    \]
    implies that the class of $\bf{C}_q$ in $\rm{K}_0(\bf{A}^1)$ is zero. 
\end{rem}

In the case where $X$ is a proper curve with singularities, the category $\DCoh(X)$ becomes harder to study. In light of the results above, it seems reasonable to ask the following:

\begin{quest}
    Does there exist a proper curve $X$ such that $\DCoh(X)$ admits a phantom category?
\end{quest}

\bibliography{refs}{}
\bibliographystyle{plain}

\end{document}